\newtheorem{thm}{Theorem}
\newtheorem{lemma}[thm]{Lemma}
\newtheorem{prop}[thm]{Proposition}
\newtheorem*{prop*}{Proposition}
\newcommand{\Unitary}[1]{\mathbb{U}\left(#1\right)}
\newcommand{\SUnitary}[1]{\mathbb{SU}\left(#1\right)}
\newcommand{\Circle}{\mathbb{S}^1}
\newtheorem{hack}{Proposition}
\newcommand{\R}{\mathbb{R}}
\newcommand{\E}{\mathbb{E}}
\newcommand{\Prob}{\mathbb{P}}
\newcommand{\N}{\mathbb{N}}
\newcommand{\Z}{\mathbb{Z}}
\DeclareMathOperator{\tr}{tr}
\DeclareMathOperator{\vol}{vol}
\newcommand{\inprod}[2]{\left\langle #1, #2 \right\rangle}
\renewcommand{\P}{\mathbb{P}}
\keywords{Unitary Brownian motion, empirical spectral measure, heat kernel measure, concentration}
\subjclass[2010]{60B20, 58J65}
\thanks{\footnotemark {$^\dagger$} Supported in part by NSF DMS 1612589.}
\thanks{\footnotemark {$^*$} Supported in part by NSF DMS 1255574.}
\email{ese3@case.edu}
\email{melcher@virginia.edu}
\begin{document}

\title[Convergence of the ESM of unitary BM]{Convergence of the empirical spectral measure of unitary Brownian motion}

\author[Meckes]{Elizabeth Meckes{$^\dagger$}}
\address[Elizabeth Meckes]{Department of Mathematics, Case Western Reserve University
220 Yost Hall, Cleveland, OH 44106 USA}

\author[Melcher]{Tai Melcher{$^*$}}
\address[Tai Melcher]{Department of Mathematics\\
University of Virginia\\
Charlottesville, Virginia 22904 USA}

\begin{abstract}
Let $\{U^N_t\}_{t\ge 0}$ be a standard Brownian motion on $\Unitary{N}$. For fixed $N\in\N$ and $t>0$, we give explicit bounds on the $L_1$-Wasserstein distance of
the empirical spectral measure of $U^N_t$ to both the ensemble-averaged
spectral measure and to the large-$N$ limiting measure identified by
Biane. We are then able to use these bounds to control the rate of convergence of paths of the measures on compact time intervals. The proofs use tools developed by the first author to study convergence rates of the classical random matrix ensembles, as well as recent estimates for the convergence of the moments of the ensemble-averaged spectral distribution.
\end{abstract}

\maketitle
\tableofcontents

\section{Introduction}
This paper studies the convergence of the empirical spectral measure
of Brownian motion on the unitary group $\Unitary{N}$ to its large $N$
limit. Brownian motion on large unitary groups has generated
significant interest in recent years, due in part to its relationships
with two-dimensional Yang-Mills theory and with the object from free
probability theory called free unitary Brownian motion. As is natural
in the context of random matrices, there has been particular focus on
the asymptotic behavior (as $N$ tends to infinity) of the spectral
measure of unitary Brownian motions; see for example \cite{Rains1997,
  Xu1997,Biane1997,Biane1997-2, Levy2008,LevyMaida2010,DHK2013,
  Kemp2015,CDK2016} and the references therein.
%\cite{BenaychLevy2011,Benaych2011,Dahlqvist2016}

Of course, many tools have been developed to study the spectral
distributions of random matrices in high dimension in a variety of
contexts.  Among them is an approach developed by the first author
with M.~Meckes (see \cite{MM2016} for a survey) which allows for
quantitative estimates on rates of convergence of the empirical
spectral measure in a wide assortment of random matrix ensembles. This
approach is based on concentration of measure and bounds for suprema
of stochastic processes, in combination with more classical tools from
matrix analysis, approximation theory, and Fourier analysis. In the
present paper, we combine some of these techniques with recent
estimates on the rates of convergence of the moments for the empirical
spectral distribution of unitary Brownian motion \cite{CDK2016} to
prove asymptotically almost sure rates of convergence.  We then use these bounds to control the rate of convergence of paths of the measures on compact time intervals.

{\bf Statement of results. }
Let $\Unitary{N}$ denote the unitary group and $\mathfrak{u}(N)$ its Lie algebra of skew-Hermitian matrices equipped with the scaled (real) inner product $\inprod{U}{V}_N:=N\tr(UV^*)$.
%\[\mathfrak{u}(N)=\left\{X\in\Mat{N}{\C}:X+X^*=0\right\}\] 
This is the unique scaling that gives meaningful limiting behavior as $N\rightarrow\infty$; see for example Remark 3.4 of \cite{DHK2013}.
The inner product on $\mathfrak{u}(N)$ induces a left-invariant Riemannian metric on $\mathbb{U}(N)$, and we may define Brownian motion on $\Unitary{N}$ as the Markov diffusion $\{U^N_t\}_{t\ge0}$ issued from the identity with generator $\frac{1}{2}\Delta_{N}$, that is, one half the left-invariant Laplacian on $\mathbb{U}(N)$ with respect to this metric. One may equivalently describe $U^N_t$ as the solution to the It\^o stochastic differential equation
\begin{equation*}
%\label{E:BM_U(N)} 
dU^N_t=U^{N}_tdW^N_t-\frac{1}{2}U^{N}_tdt
\end{equation*}
with $U^N_0=I_N$, where $W_t$ is a standard Brownian motion on $\mathfrak{u}(N)$
(for example, take $\{\xi_k\}_{k=0}^{N^2-1}$ an orthonormal basis of $\mathfrak{u}(N)$ with respect to the given inner product and $W^N_t=\sum_{j=0}^{N^2-1}b^j_t\xi_j,$ where the
$b^j_t$ are independent standard Brownian motions on $\R$). This
realization of unitary Brownian motion is computationally more useful
and is mainly what will be used in the sequel. It should be noted that
another standard description of the unitary Brownian motion is via a stochastic differential equation with respect to a Hermitian Brownian motion, which results in a difference of a factor of $i$ in the diffusion coefficient. For $t>0$, let $\rho^N_t=\mathrm{Law}(U^N_t)$ denote the end point distribution of Brownian motion; $\rho^N_t$ is called the heat kernel measure on $\mathbb{U}(N)$.

Our primary object of interest is the empirical spectral measure of
unitary Brownian motion.  
A matrix $U\in\mathbb{U}(N)$ has $N$ complex eigenvalues of modulus
one which we denote by $e^{i\theta_1},\ldots,e^{i\theta_Nß}$ (repeated
according to multiplicity), and the spectral measure of $U$ is defined
to be the probability measure on the unit circle $\mathbb{S}^1$ given by
\[\mu_U:=\frac{1}{N}\sum_{j=1}^N \delta_{e^{i\theta_j}}.\]  
In particular, for $f\in C(\mathbb{S}^1)$
\[ \int_{\mathbb{S}^1} f d\mu_U = \frac{1}{N}\sum_{j=1}^N f(e^{i\theta_j}). \]
For each fixed $t>0$, $U^N_t$ is a random unitary matrix, and we denote its empirical spectral measure by $\mu^N_t:=\mu_{U^N_t}$. In \cite{Biane1997}, Biane showed that the random probability measure $\mu^N_t$ converges weakly almost surely to a deterministic probability measure, which we denote by $\nu_t$: that is, for all $f\in C(\mathbb{S}^1)$,
\[ \lim_{N\rightarrow\infty} \int_{\mathbb{S}^1} f d\mu^N_t = \int_{\mathbb{S}^1} f\,d\nu_t \text{ a.s.} \]
The measure $\nu_t$ represents in some sense the spectral distribution of a ``free unitary Brownian motion''.
For $t>0$, $\nu_t$ possesses a continuous density that is symmetric
about $1\in\mathbb{S}^1$.  When $0<t<4$, $\nu_t$ is supported on an
arc strictly contained in the circle;  for $t\ge4$, $\mathrm{supp}(\nu_t)=\mathbb{S}^1$.
The paper \cite{CDK2016} presents a nice brief summary of these and other properties of $\nu_t$ and the construction of free unitary Brownian motion.

In the present paper, we give estimates on the $L_1$-Wasserstein
distance between the empirical spectral distribution $\mu_t^N$ and its
limiting spectral measure $\nu_t$, where for probability measures
$\mu$ and $\nu$ on $\mathbb{C}$, the $L_1$-Wasserstein distance is defined by
\[ W_1(\mu,\nu):=\inf \left\{\int|x-y|\,d\pi(x,y): \pi \text{ is a coupling of } \mu \text{ and } \nu\right\}.\]
We will also make use of the equivalent dual representation of $W_1$ due to Kantorovich and Rubenstein:
\[ W_1(\mu,\nu) = \sup \left\{\int f\,d\mu -\int f\,d\nu : |f|_{L}\le1 \right\}, \]
where $|f|_L$ denotes the Lipschitz constant of $f$.

The main results of this paper are the following.
\begin{thm}\label{T:main}
	Let $\{U^N_t\}_{t\ge0}$ be a Brownian motion on $\mathbb{U}(N)$. For $t>0$, let $\mu_t^N$ denote the empirical spectral measure $U_t$ as above, and let $\overline{\mu}_t^N$ denote the ensemble-averaged spectral measure of $U^N_t$ defined by 
\[\int_{\Circle}fd\overline{\mu}^N_t:=\E\int_{\Circle}fd\mu^N_t.\]  
\begin{comment}There are constants $a,b,c,C$, independent of $N,t$, such that 
\[\P\big[W_1(\mu_t,\overline{\mu}_t)\ge\E
W_1(\mu_t,\overline{\mu}_t)+x\big]\le
Ce^{-\frac{cN^2x^2}{\min\{t,1\}}},\]
and 
\[\E W_1(\mu^N_t,\overline{\mu}^N_t)\le
a\left(\frac{\min\{t,1\}}{N^2}\right)^{1/3}.\]

Moreover, for the measure $\nu^N_t$ described above, 
\[W_1(\overline{\mu}_t^N,\nu_t)\le b\frac{t^{2/5}\log N}{N^{2/5}}.\]
\end{comment}
	Then there is a constant $C\in(0,\infty)$ such that with
        probability one, for all $N\in\mathbb{N}$ sufficiently large and $t>0$,
\[W_1(\mu^N_{t},\overline{\mu}^N_{t})\le
C\left(\frac{t}{N^2}\right)^{1/3}.\]
and, for all $N\in\mathbb{N}$ sufficiently large and $t\ge 8(\log N)^2$,
\[W_1(\mu^N_{t},\overline{\mu}^N_{t})\le \frac{C}{N^{2/3}}.\]

\end{thm}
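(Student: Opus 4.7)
The plan is to combine a log-Sobolev-based concentration estimate for Lipschitz functionals of $U^N_t$ with an expectation bound on $W_1(\mu^N_t,\overline{\mu}^N_t)$ obtained through Fourier analysis, and then to upgrade these ingredients to an almost-sure statement via Borel--Cantelli combined with continuity of the paths. The starting observation is that for $f:\Circle\to\R$ with $|f|_L\le 1$, the functional $U\mapsto\int f\,d\mu_U$ is $\frac{C}{N}$-Lipschitz on $\Unitary{N}$ with respect to the scaled Riemannian metric from $\inprod{\cdot}{\cdot}_N$ (a Hoffman--Wielandt/Lidskii-type eigenvalue perturbation bound combined with the chord-arc comparison on $\Circle$), so by duality the map $U\mapsto W_1(\mu_U,\overline{\mu}^N_t)$ is $\frac{C}{N}$-Lipschitz as well. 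Since the scaled metric gives $\Unitary{N}$ Ricci curvature bounded below by a universal positive constant, $\rho^N_t$ satisfies a log-Sobolev inequality with constant of order $\min(t,1)$, and a standard Herbst argument yields
\[\Prob\bigl(W_1(\mu^N_t,\overline{\mu}^N_t)\ge\E W_1(\mu^N_t,\overline{\mu}^N_t)+x\bigr)\le 2\exp\!\Bigl(-\tfrac{cN^2x^2}{\min(t,1)}\Bigr).\]

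For the expectation, expand a $1$-Lipschitz $f=\sum_n\hat f(n)e^{in\theta}$ so that
\[\int f\,d(\mu^N_t-\overline{\mu}^N_t)=\sum_{n\ne0}\hat f(n)\,X_n,\qquad X_n:=\tfrac1N\tr\bigl((U^N_t)^n\bigr)-\E\tfrac1N\tr\bigl((U^N_t)^n\bigr),\]
and exploit $|\hat f(n)|\le 1/|n|$. Two estimates on $X_n$ are available: the deterministic bound $|X_n|\le 2/N$, and a variance estimate $\E|X_n|^2\le C n^2\min(t,1)/N^2$ obtained by applying the concentration above to the $|n|/N$-Lipschitz functional $U\mapsto\tfrac1N\tr(U^n)$. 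Truncating the Fourier sum at frequency $K$, handling low-frequency modes via the variance estimate (together with a maximal inequality leveraging sub-Gaussian tails) and high-frequency modes via the deterministic bound, and optimizing $K\sim(N^2/\min(t,1))^{1/3}$ as in the Meckes--Meckes framework (combined with the refined moment estimates of \cite{CDK2016}), yields
\[\E W_1(\mu^N_t,\overline{\mu}^N_t)\le C\bigl(\min(t,1)/N^2\bigr)^{1/3}\le C(t/N^2)^{1/3}.\]

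Substituting this expectation bound into the concentration inequality with $x$ of order $N^{-1}\sqrt{\min(t,1)\log N}$ gives a tail bound of order $N^{-2}$; summing over $N$ and over a countable dense set of times, invoking Borel--Cantelli, and using the almost-sure continuity of $t\mapsto\mu^N_t$ and $t\mapsto\overline{\mu}^N_t$ in $W_1$ (which follows from continuity of the Brownian path and continuous dependence of the eigenvalues on $U^N_t$) extends the first bound to hold simultaneously in $t>0$ for all sufficiently large $N$. For the second bound, in the regime $t\ge 8(\log N)^2$ the density $\rho^N_t$ is super-polynomially close to Haar measure in total variation, via the Peter--Weyl expansion and the gap between the trivial and the nontrivial Casimir eigenvalues of $\Delta_N$; this allows coupling $U^N_t$ with a Haar-distributed $V_N\in\Unitary{N}$ so that $U^N_t=V_N$ off an event of negligible probability, and known rates $W_1(\mu_{V_N},\mathrm{unif}(\Circle))\lesssim\log N/N\le CN^{-2/3}$ for Haar unitaries, together with the corresponding convergence $\overline{\mu}^N_t\to\mathrm{unif}(\Circle)$, conclude via the triangle inequality. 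The principal technical obstacle is the expectation bound: the $n^2$ growth of $\Var(X_n)$ forces the $1/3$ exponent via a careful balancing between the deterministic and variance-based estimates, since naive Cauchy--Schwarz on the Fourier sum produces only an $N^{-1}\min(t,1)^{1/4}$ rate.
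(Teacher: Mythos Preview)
There are two genuine gaps, both stemming from the same geometric fact: the Ricci curvature of $\Unitary{N}$ with the scaled metric is only \emph{nonnegative}, not bounded below by a positive constant. The center direction $\mathfrak{u}(1)\subset\mathfrak{u}(N)$ is flat, so the best log-Sobolev constant for $\rho^N_t$ is $t$, not $\min(t,1)$. Your concentration inequality with $\min(t,1)$ in the exponent is therefore false as stated; the paper only obtains $\Prob(\cdot)\le 2e^{-N^2x^2/t}$ from curvature alone.

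This error is harmless for the first conclusion (the claimed bound is $(t/N^2)^{1/3}$, not $(\min(t,1)/N^2)^{1/3}$), but it is fatal for your approach to the second. You argue that for $t\ge 8(\log N)^2$ the heat kernel $\rho^N_t$ is close to Haar measure on $\Unitary{N}$ in total variation. It is not: under the decomposition $U_t=z_tV_t$ with $z_t=e^{ib^0_t/N}$, the circle factor $z_t$ is a Brownian motion on $\Circle$ run at speed $1/N^2$, so its law is still essentially a point mass at time $t=8(\log N)^2\ll N^2$. Consequently $\rho^N_t$ is far from Haar on $\Unitary{N}$ in total variation, and your coupling with a Haar unitary cannot be realized. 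The paper's route is different: it uses the factorization $U_t=z_tV_t$ together with the \emph{strictly} positive Ricci curvature of $\SUnitary{N}$ to obtain a $t$-independent concentration inequality for Lipschitz functionals of $U_t$ once $t\ge 8(\log N)^2$ (Proposition~\ref{T:BM-U(N)-concentration}, second half), handling the $z_t$ factor by hand. The same Dudley/piecewise-linear machinery then gives $\E W_1\le cN^{-2/3}$ directly.

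Two smaller points. Your deterministic bound $|X_n|\le 2/N$ is wrong: $|\tfrac{1}{N}\tr(U^n)|\le 1$, so one only has $|X_n|\le 2$, and the high-frequency tail $\sum_{|n|>K}|\hat f(n)|\,|X_n|$ does not converge. The paper handles this by approximating $f$ by piecewise-linear functions (Jackson/Lebesgue) rather than Fourier truncation, which gives a clean $O(1/m)$ approximation error and reduces to a finite-dimensional Dudley bound. Finally, the Borel--Cantelli step is for fixed $t$ over $N$; no countable dense set of times or path continuity argument is needed for Theorem~\ref{T:main} (that machinery appears later, in Theorem~\ref{T:close-paths}).
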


\begin{thm}\label{T:main-nu-t}
Let $\nu_t$ be the limiting spectral measure for unitary Brownian
motion described above.  There is a constant $C\in(0,\infty)$ such that for all $N\in\mathbb{N}$ and $t>0$
	\[W_1(\overline{\mu}_t^N,\nu_t)\le
        C\min\left\{\frac{t^{2/5}\log N}{N^{2/5}},e^{-\frac{t(1+o(1))}{8\log(N)}}+\frac{1}{N}\right\}.\]
\end{thm}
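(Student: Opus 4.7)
The plan is to combine Kantorovich–Rubinstein duality with Fourier analysis on $\Circle$ and the moment estimates from \cite{CDK2016}. Set $m_k^N(t):=N^{-1}\E\tr((U^N_t)^k)$ and $\nu_t^{(k)}:=\int_{\Circle} z^k\,d\nu_t(z)$; these are the Fourier coefficients of $\overline{\mu}_t^N$ and $\nu_t$ respectively. For any $f:\Circle\to\R$ with $|f|_L\le 1$ one has $|\hat f(k)|\le 1/|k|$ for $k\ne0$, and Jackson's theorem produces, for each $K\in\N$, a trigonometric polynomial $p_K$ of degree at most $K$ with $\|f-p_K\|_\infty\le C/K$ and $|\hat p_K(k)|\le|\hat f(k)|$. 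Combining these yields the master inequality
\[
\left|\int f\,d\overline{\mu}_t^N-\int f\,d\nu_t\right|\le \frac{2C}{K}+\sum_{0<|k|\le K}\frac{|m_k^N(t)-\nu_t^{(k)}|}{|k|},
\]
and the rest of the argument is a matter of choosing $K$ optimally in each regime.

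For the first bound I would insert the moment estimate from \cite{CDK2016}, which controls $|m_k^N(t)-\nu_t^{(k)}|$ by a polynomial in $k$ and $t$ divided by $N^2$. The head sum then reduces to something of the form $c\,t^\beta K^\alpha/N^2$, and balancing this against the Jackson error $C/K$ gives an optimal truncation $K\asymp(N^2/t^{\beta})^{1/(\alpha+1)}$, producing the advertised rate $t^{2/5}\log N/N^{2/5}$, with the $\log N$ absorbing a possible logarithmic factor from the moment estimate.

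For the second bound, which is useful when $t$ is large, I would instead apply the triangle inequality against the uniform Haar measure $\lambda$ on $\Circle$,
\[W_1(\overline{\mu}_t^N,\nu_t)\le W_1(\overline{\mu}_t^N,\lambda)+W_1(\nu_t,\lambda),\]
and attack each summand by the same Fourier/Jackson decomposition, but now exploiting the decay of individual moments rather than their differences. Biane's explicit formula $\nu_t^{(k)}=k^{-1}e^{-kt/2}L^{(1)}_{k-1}(kt)$, together with the large-argument asymptotics of the Laguerre polynomials, shows that $|\nu_t^{(k)}|$ decays super-exponentially in $k$ once $t$ is moderately large, and the $O(1/N^2)$ correction from \cite{CDK2016} converts this into a comparable bound for $|m_k^N(t)|$ modulo the residual $1/N$. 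Truncating at $K\asymp\log N$ balances the polynomial-in-$K$ factor of the moments against the exponential factor $e^{-Kt/2}$, yielding the exponent $1/(8\log N)$. The main obstacle is precisely this balancing: extracting the stated exponent requires carefully tracking the leading term of the Laguerre expansion against $e^{-kt/2}$ at the crossover scale $K\sim\log N$, and showing that the finite-$N$ correction contributes at most $O(1/N)$ after summation; the first bound, by contrast, is essentially a single optimization over $K$ once the CDK estimate is in hand.
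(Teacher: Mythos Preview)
Your treatment of the first bound is essentially the paper's argument: Fourier truncation plus the CDK moment estimate $|m_k^N(t)-\nu_t^{(k)}|\le t^2k^4/N^2$, followed by optimization over the truncation level. One small correction: the $\log N$ in the paper does not come from CDK but from Lebesgue's estimate $\|f-S_m\|_\infty\le C(\log m)\,E_m(f)$ for the partial Fourier sum; if your Jackson polynomial really satisfied $|\hat p_K(k)|\le|\hat f(k)|$ (as, e.g., de la Vall\'ee Poussin means do) you would actually \emph{remove} the $\log N$, not absorb one.

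For the second bound your proposal has a genuine gap. You want to control $|m_k^N(t)|$ by $|\nu_t^{(k)}|$ plus ``the $O(1/N^2)$ correction from CDK,'' but that correction is $t^2k^4/N^2$: it carries a factor of $t^2$ and is useless precisely in the large-$t$ regime where the second bound is relevant. Separately, truncating at $K\asymp\log N$ leaves a Jackson/approximation error of order $1/\log N$, which already swamps the target $e^{-t/(8\log N)}+1/N$; no balancing of the moment tail can repair this, and the exponent $1/(8\log N)$ is not the output of any Fourier optimization.

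The paper obtains this term by an entirely different mechanism. It writes $U_t^N=z_tV_t$ with $V_t$ a Brownian motion on $\SUnitary{N}$ (Lemma~\ref{l.couple}), couples $U_t^N$ to a Haar unitary, and invokes Saloff-Coste's sharp $L^1$ mixing estimate $\|h_t^{\SUnitary{N}}-1\|_1\le e^{-t(1+o(1))/(8\log N)}$ for the heat kernel on $\SUnitary{N}$; that mixing rate is the source of the exponent, and the residual $1/N$ comes from the scalar factor $z_t$. Your Fourier argument \emph{does} match the paper's proof of $W_1(\nu_t,\nu)\le Ct^{3/2}e^{-t/4}$ (Proposition~\ref{T:biane-to-uniform}), but for $W_1(\overline\mu_t^N,\nu)$ you need the coupling/mixing input.
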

One may infer from these bounds direct (a.s.)~estimates on the rate of
convergence of the empirical spectral distribution to its limiting
distribution for all sufficiently large $N$. To the authors'
knowledge, these results constitute the first known rates of
convergence for $\mu_t^N$ itself; previously the only known
convergence rates were for moments of the ensemble-averaged spectral
measure $\overline{\mu}^N_t$ \cite{CDK2016}.  

A key advantage of such rates is that they may be applied to obtain
almost sure convergence of paths of spectral measures. The following theorem gives uniform bounds on the Wasserstein distance between the empirical spectral measures and the deterministic limiting measures on compact time intervals. 

\begin{thm}\label{T:close-paths}
Let $T\ge 0$.  There are constants $c,C$ such that for all $x\ge
c\frac{T^{2/5}\log(N)}{N^{2/5}},$ 
	\begin{align*}\P\left(\sup_{0\le t\le
		T}W_1(\mu^N_t,\nu_t)>x\right)\le
  C\left(\frac{T}{x^2}+1\right)e^{-\frac{N^2x^2}{T}}.\end{align*}
In particular,  with probability one for $N$ sufficiently large 
\[\sup_{0\le t\le
  T}W_1(\mu^N_t,\nu_t)\le c\frac{T^{2/5}\log(N)}{N^{2/5}}.\]
\end{thm}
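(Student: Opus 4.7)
The plan is to control the supremum by a chaining/discretization argument: combine the fixed-time estimates from Theorems~\ref{T:main} and \ref{T:main-nu-t} at the points of a sufficiently fine time grid with a modulus of continuity for the path $t\mapsto\mu^N_t$ in $W_1$. Fix a mesh $\delta>0$ (to be chosen of order $x^2$), set $t_k=k\delta$ for $k=0,\ldots,M$ with $M=\lceil T/\delta\rceil\le T/x^2+1$, and for any $t\in[t_k,t_{k+1}]$ write, by the triangle inequality,
\[
W_1(\mu^N_t,\nu_t)\le W_1(\mu^N_t,\mu^N_{t_k})+W_1(\mu^N_{t_k},\overline\mu^N_{t_k})+W_1(\overline\mu^N_{t_k},\nu_{t_k})+W_1(\nu_{t_k},\nu_t).
\]

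At each grid point, the proof of Theorem~\ref{T:main} furnishes a Gaussian concentration bound of the form $\P[W_1(\mu^N_{t_k},\overline\mu^N_{t_k})\ge y]\le Ce^{-cN^2y^2/T}$ (the inequality appearing in the commented-out version of the theorem statement), while Theorem~\ref{T:main-nu-t} gives the deterministic bound $W_1(\overline\mu^N_{t_k},\nu_{t_k})\le CT^{2/5}\log(N)/N^{2/5}\le x/4$ by the hypothesis on $x$. A union bound over the $M$ grid points yields
\[
\P\bigl(\max_k W_1(\mu^N_{t_k},\overline\mu^N_{t_k})>x/4\bigr)\le C(T/x^2+1)e^{-cN^2x^2/T},
\]
which accounts for the prefactor in the conclusion.

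For the two continuity terms, I use the Hoffman-Wielandt-type estimate
\[
W_1(\mu_U,\mu_V)^2\le\tfrac{1}{N}\tr\bigl((U-V)(U-V)^*\bigr),
\]
valid for $U,V\in\mathbb{U}(N)$. Taking expectations with $U=U^N_s$, $V=U^N_t$ and using left-invariance of the Brownian motion gives $\E W_1(\mu^N_s,\mu^N_t)\le C\sqrt{|t-s|}$; by the dual characterization of $W_1$ this passes to $W_1(\overline\mu^N_s,\overline\mu^N_t)\le C\sqrt{|t-s|}$, and sending $N\to\infty$ via Theorem~\ref{T:main-nu-t} gives $W_1(\nu_s,\nu_t)\le C\sqrt{|t-s|}$. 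So the $W_1(\nu_{t_k},\nu_t)$ term is $\le C\sqrt{\delta}\le x/4$ for $\delta\le c'x^2$. For the random oscillation, left-invariance yields $(U^N_{t_k})^*U^N_{t_k+u}\stackrel{d}{=}U^N_u$, reducing the problem to a single maximal inequality
\[
\P\!\left(\sup_{u\in[0,\delta]}W_1(\mu_{U^N_u},\mu_{I})>x/4\right)\le Ce^{-cN^2x^2/\delta}.
\]

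This maximal inequality is the main obstacle. Since $U\mapsto\sqrt{N^{-1}\tr((U-I)(U-I)^*)}$ is $N^{-1}$-Lipschitz on $\mathbb{U}(N)$ equipped with the scaled metric from $\inprod{\cdot}{\cdot}_N$, Gaussian concentration for unitary Brownian motion (a consequence of the uniform Bakry-\'Emery/Ricci lower bound on $\mathbb{U}(N)$ in this normalization) gives a tail of order $Ce^{-cN^2x^2/u}$ at each fixed $u$; upgrading to the supremum over $u\in[0,\delta]$ is then handled by a Doob/martingale-type estimate or by metric-entropy chaining applied to the path $u\mapsto U^N_u$, whose $W_1$-H\"older continuity in expectation was just established. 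Choosing $\delta=c'x^2$ and assembling the four bounds gives the announced probability estimate, and the almost-sure conclusion follows from Borel-Cantelli applied with $x_N=cT^{2/5}\log(N)/N^{2/5}$, for which the right-hand side of the probability bound is summable in $N$.
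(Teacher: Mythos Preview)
Your overall strategy---discretize $[0,T]$, apply the fixed-time concentration and the deterministic bound from Theorem~\ref{T:main-nu-t} at grid points, and handle the two continuity terms separately---is exactly the paper's. Your derivation of $W_1(\nu_s,\nu_t)\le C\sqrt{|t-s|}$ is essentially the paper's Lemma~\ref{T:nu-continuity}, and your four-term decomposition collapses to the paper's three-term one once the middle two are combined.

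The gap is the step you yourself flag as ``the main obstacle'': the maximal inequality
\[
\P\!\left(\sup_{u\in[0,\delta]}\|U^N_u-I\|_N>\tfrac{Nx}{4}\right)\le Ce^{-cN^2x^2/\delta}.
\]
Saying this is ``handled by a Doob/martingale-type estimate or by metric-entropy chaining'' is not enough. A Doob argument would require exhibiting an exponential submartingale dominating $\|U_t-I\|_N$; the natural candidate $\|U_t-I\|_N^2=2N^2-2N\Re\tr U_t$ is only a submartingale while $\Re\tr U_t>0$, and Doob's $L^1$ inequality gives a polynomial rather than exponential tail. Chaining is also delicate: the increments $\|U_t-I\|_N/N-\|U_s-I\|_N/N$ have \emph{mean} of order $\sqrt{|t-s|}$ (not $\sqrt{|t-s|}/N$), so the process is not sub-Gaussian with variance proxy $|t-s|/N^2$ in the usual sense, and the standard Dudley bound does not directly yield the claimed exponent.

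The paper instead proves a dedicated exit-time estimate (Proposition~\ref{T:BM-tail}): using Grigor'yan's heat-kernel bound together with Bishop--Gromov volume comparison (valid since $\mathrm{Ric}\ge 0$ on $\Unitary{N}$), one gets
\[
\P\!\left(\sup_{0<t<\delta}d_g(U_t,I_N)\ge r+2s\right)\le 16\Bigl(1+\tfrac{r}{s}\Bigr)^{N^2}e^{-r^2/(2\delta)}.
\]
The $(1+r/s)^{N^2}$ volume-ratio prefactor is the price of this approach; it forces the mesh to be chosen as $m\asymp (T\log 3)/x^2$ rather than just $T/x^2$, so that the prefactor is absorbed into the exponential. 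Your sketch has the right architecture but is missing this concrete tool (or a genuine substitute) for the oscillation term.
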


As a technical tool, we also determine rates for the convergence in time of Biane's measure to the uniform distribution on $\mathbb{S}^1$.

\begin{prop}\label{T:biane-to-uniform}
	Let $\nu_t$ denote the limiting spectral measure and $\nu$ the uniform measure on $\mathbb{S}^1$. Then there is a constant $C\in(0,\infty)$ so that for all $t\ge1$
\[W_1(\nu_t,\nu)\le Ct^{3/2}e^{-t/4}.\]
\end{prop}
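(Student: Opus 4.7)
The plan is to reduce $W_1(\nu_t,\nu)$ to a sum over the moments $\tau_n(t) := \int_{\Circle} z^n\,d\nu_t(z)$ via Kantorovich-Rubinstein duality and Fourier analysis, and then to bound those moments using Biane's explicit formula. For any real-valued $1$-Lipschitz $f$ on $\Circle$, an integration by parts gives $|\hat f(n)|\le 1/|n|$ for $n\ne 0$, where $\hat f(n):=\int f(e^{i\theta})e^{-in\theta}\,\frac{d\theta}{2\pi}$. Since $\nu$ is uniform, $\hat\nu(n)=0$ for $n\ne 0$, and $\tau_n(t)$ is real by the conjugation symmetry of $\nu_t$ about $1$. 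Pairing $f$ with the signed measure $\nu_t-\nu$ via Parseval (valid for $t>0$ since $\nu_t$ has a continuous density) yields
\[
\int f\,d(\nu_t-\nu) = \sum_{n\ne 0}\hat f(n)\tau_n(t),
\]
so taking the supremum over Lipschitz $f$ produces
\[
W_1(\nu_t,\nu) \le 2\sum_{n=1}^{\infty}\frac{|\tau_n(t)|}{n}.
\]

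The second step is to insert Biane's identity $\tau_n(t) = \frac{e^{-nt/2}}{n}L_{n-1}^{(1)}(nt)$, where $L_{n-1}^{(1)}$ is the generalized Laguerre polynomial, and to bound the resulting alternating sum by taking absolute values termwise. Using $\binom{n}{k+1}\le n^{k+1}/(k+1)!$ recasts the sum as a partial sum of the series for the modified Bessel function,
\[
|L_{n-1}^{(1)}(nt)| \le n\sum_{k=0}^{\infty}\frac{(n^2t)^k}{k!(k+1)!} = \frac{I_1(2n\sqrt t)}{\sqrt t},
\]
and combining with the bound $I_1(x)\le e^{x}/\sqrt{2\pi x}$ yields the moment estimate
\[
|\tau_n(t)| \le \frac{C\,e^{-n(t/2-2\sqrt t)}}{n^{3/2}\,t^{3/4}}.
\]

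For $t$ large enough that $t/2-2\sqrt t\ge t/4$ (say $t\ge 64$), the series $\sum_{n\ge 1}e^{-n(t/2-2\sqrt t)}/n^{5/2}$ is controlled by a constant multiple of its first term, giving
\[
W_1(\nu_t,\nu)\le \frac{C'\,e^{-t/4}}{t^{3/4}} \le C't^{3/2}e^{-t/4}.
\]
For the bounded intermediate range $t\in[1,64]$, the trivial bound $W_1(\nu_t,\nu)\le 2$ together with the strictly positive minimum of $t^{3/2}e^{-t/4}$ on this compact interval allows one to enlarge $C$ to cover the claim. The main obstacle is that the Bessel majorization discards essentially all of the cancellation present in Biane's alternating Laguerre expression, so the bound only becomes effective once $t$ is substantially larger than $16$; the exponent $-t/4$ appearing in the proposition (rather than the true decay rate $-t/2$ visible already at the level of $\tau_1(t)$) is precisely the price paid for this crude step, and the polynomial factor $t^{3/2}$ generously absorbs all remaining polynomial slack.
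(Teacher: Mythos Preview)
Your proof is correct and takes a genuinely different route from the paper's. The paper truncates the Fourier series of a Lipschitz test function at level $m$, bounds the moments by the very crude estimate $|Q_k(t)|\le t^{k-1}k[(k-1)!]^2$ obtained by a short induction, and then chooses $m\sim t^{-1/2}e^{t/4}$ so that the truncated sum telescopes to $O(e^{-t/2})$; the factor $t^{3/2}e^{-t/4}$ then arises from the Jackson--Lebesgue bound $\|f-S_m\|_\infty\le C(\log m)/m$. In contrast, you avoid any truncation by recognising $Q_n(t)=\frac{1}{n}L_{n-1}^{(1)}(nt)$, majorising the alternating Laguerre sum termwise by the Bessel series $I_1(2n\sqrt t)/\sqrt t$, and invoking $I_1(x)\le Ce^x/\sqrt{x}$. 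This gives the much sharper moment bound $|\tau_n(t)|\le Cn^{-3/2}t^{-3/4}e^{-n(t/2-2\sqrt t)}$, summable over all $n$, so the full Fourier series can be controlled directly. Your argument in fact yields $W_1(\nu_t,\nu)\le Ct^{-3/4}e^{-t/4}$ for $t\ge 64$, strictly better than the stated $t^{3/2}e^{-t/4}$; the paper's exponent $3/2$ is an artefact of the truncation-plus-approximation scheme rather than intrinsic. The only mild soft spot is the Parseval step, which is justified here because for $t\ge 64$ your moment bound gives $\sum_n|\tau_n(t)|<\infty$, so the Fourier series of the density converges uniformly and term-by-term integration is legitimate; for $t\in[1,64]$ you do not use Parseval at all.
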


The organization of the paper is as follows. In Section \ref{s.lsi},
we establish improved concentration estimates for
heat kernel measure on $\mathbb{U}(N)$ via a coupling of Brownian
motions on $\mathbb{S}^1$ and $\mathbb{SU}(N)$. These estimates are
then used in Section \ref{s.concentration} to prove Theorem \ref{T:main}. In
Section \ref{s.nut} we use Fourier and classical approximation
methods, as well as the previously mentioned coupling argument, to give bounds on the rate of convergence of the ensemble-averaged spectral measure to the limiting measure $\nu_t$ as in Theorem \ref{T:main-nu-t}. In this section, we also give the proof of Proposition \ref{T:biane-to-uniform} using similar methods. Finally, in Section \ref{s.paths}, we prove a tail bound on the metric radius of the unitary Brownian motion and a continuity result for the family of measures $\{\nu_t\}_{t>0}$, which are then both used to give the proof of Theorem \ref{T:close-paths}.

\section{A concentration inequality for heat kernel measure}
\label{s.lsi}

In this section, we will consider concentration of measure results for Lipschitz functions of the following form. Let $(X,d)$ be a metric space equipped with Borel probability measure $\rho$. Then, under some conditions, there exists $C>0$ such that, for all $r>0$ and $F:X\rightarrow \mathbb{R}$ Lipschitz with Lipschitz constant $L$ and $\mathbb{E}|F|<\infty$,
\begin{equation}
\label{e.conc}
\rho\left(\left|F-\mathbb{E}F\right|\ge r\right) \le 2e^{-r^2/L^2C}.
\end{equation}

Concentration estimates of this type are standard for heat kernel measure on a Riemannian manifold with curvature bounded below. We recall here the necessary results. Let $(M,g)$ be a complete Riemannian manifold, and let $\Delta$ denote the Laplace-Beltrami operator acting on $C^{\infty}(M)$.  We write $P_t=e^{t\Delta/2}$ to denote the heat
semigroup; that is, for $t>0$ and any sufficiently nice function $f:M\rightarrow\mathbb{R}$,
\[ P_tf(x) = \mathbb{E}[f(\xi^x_t)] = \int_M f\,d\rho^x_t \]
where $\{\xi^x_t\}_{t\ge0}$ is the Markov diffusion on $M$ started at $x$ with generator $\Delta$ (that is, $\xi^x$ is a Brownian motion on $M$) and $\rho^x_t=\mathrm{Law}(\xi^x_t)$ is the heat kernel measure. If $\mathrm{Ric}$ denotes the Ricci curvature tensor on $M$, then $\mathrm{Ric}\ge 2k$ for $k\in\mathbb{R}$ implies that for all $t>0$ the estimate (\ref{e.conc}) holds for $\rho_t$ with coefficient $C(t)=2(1-e^{-kt/2})/k$, where when $k=0$, we interpret this to be $C(t)=t$. (A typical proof is via log Sobolev estimates.) See for example Corollary 2.6 and Lemma 6.3 of \cite{Ledoux1999} (stated in the case that $k\ge0$, which is the only relevant case here). 

For small $t$ the general machinery described above leads to a sharp concentration estimate for heat kernel measure $\rho^N_t$ on $\Unitary{N}$.  For large $t$, the estimates are
no longer sharp, but we can improve them using a coupling approach inspired by one
in \cite{MM-ECP-2013}. The following lemma gives the key idea.

\begin{lemma}
\label{l.couple}
Let $b^0$ be a real-valued Brownian motion and $z_t := e^{ib_t^0/N}$,
and let $V_t$ be a Brownian motion on $\SUnitary{N}$ issued from the identity. Then $z_tV_t$ is a Brownian motion on $\mathbb{U}(N)$.
\end{lemma}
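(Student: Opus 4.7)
The plan is to verify that $U_t := z_t V_t$ solves the defining It\^o SDE for Brownian motion on $\Unitary{N}$ starting at $I_N$, and then invoke strong uniqueness. The structural observation is that $\mathfrak{u}(N) = \mathfrak{su}(N) \oplus \mathfrak{z}$, with $\mathfrak{z} = i\R \cdot I_N$, is an orthogonal decomposition under the scaled inner product (since $\tr(X) = 0$ for $X \in \mathfrak{su}(N)$), and the element $\xi_0 := iI_N/N$ is a unit vector in $\mathfrak{z}$. Thus a standard Brownian motion on $\mathfrak{z}$ is $b_t^0 \xi_0$, and its exponential is precisely $z_t = e^{ib_t^0/N}$, realizing Brownian motion on the central torus $\Circle \cdot I_N$.

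I would first compute the individual SDEs in It\^o form. A direct application of It\^o's formula gives $dz_t = \tfrac{i}{N} z_t\, db_t^0 - \tfrac{1}{2N^2}\, z_t\, dt$. For the $\SUnitary{N}$ factor, the same construction as in the excerpt yields $dV_t = V_t\,d\widetilde{W}_t - \tfrac{1}{2}\,\tfrac{N^2-1}{N^2}\, V_t\, dt$, where $\widetilde{W}_t$ is a Brownian motion on $\mathfrak{su}(N)$ (independent of $b^0$). The drift coefficient $\tfrac{N^2-1}{N^2}$ follows from the Casimir identity $\sum_{k=1}^{N^2-1} \tilde\xi_k^2 = -\tfrac{N^2-1}{N^2}\, I_N$ for any orthonormal basis of $\mathfrak{su}(N)$ in the inherited inner product. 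I would verify this by completing $\{\tilde\xi_k\}$ via $\xi_0$ to an orthonormal basis of $\mathfrak{u}(N)$, checking $\sum_{k=0}^{N^2-1} \xi_k^2 = -I_N$ on a standard basis of skew-Hermitian matrix units (a short elementary computation), and subtracting $\xi_0^2 = -I_N/N^2$.

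Next I would apply It\^o's product rule to $U_t = z_t V_t$. By independence of $b^0$ and $\widetilde{W}$ the cross-variation vanishes, and the surviving terms collapse to
\[
dU_t = U_t\!\left(\tfrac{iI_N}{N}\, db_t^0 + d\widetilde{W}_t\right) - \tfrac{1}{2}\!\left(\tfrac{1}{N^2} + \tfrac{N^2-1}{N^2}\right)\! U_t\, dt = U_t\, dW_t^N - \tfrac{1}{2}\, U_t\, dt,
\]
where $W_t^N := \xi_0\, b_t^0 + \widetilde{W}_t$. Since $\mathfrak{z}$ and $\mathfrak{su}(N)$ are orthogonal complementary subspaces of $\mathfrak{u}(N)$, and the two summands of $W^N$ are independent Brownian motions in these subspaces, $W^N$ is a standard Brownian motion on $\mathfrak{u}(N)$. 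Together with $U_0 = I_N$, strong uniqueness of solutions to the defining SDE identifies $U_t$ in law with $U^N_t$.

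The only genuine computation is the Casimir identity $\sum_k \xi_k^2 = -I_N$, together with the arithmetic $\tfrac{1}{N^2} + \tfrac{N^2-1}{N^2} = 1$ that makes the two drifts combine correctly. This numerical coincidence is really the statement that the Laplacian on $\mathfrak{u}(N)$ splits as the sum of the Laplacians on its two orthogonal factors, and is the structural heart of the lemma.
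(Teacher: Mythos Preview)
Your proposal is correct and follows essentially the same approach as the paper: both compute the It\^o SDEs for $z_t$ and $V_t$ separately, apply the It\^o product rule (using that $z_t$ is scalar and independent of $V_t$ so the cross-variation vanishes), and observe that the two drifts combine to $-\tfrac{1}{2}U_t\,dt$ while $\xi_0 b_t^0 + \widetilde{W}_t$ is a Brownian motion on $\mathfrak{u}(N)$. You are somewhat more explicit than the paper about the orthogonal decomposition $\mathfrak{u}(N)=\mathfrak{su}(N)\oplus\mathfrak{z}$, the Casimir identity underlying the $\SUnitary{N}$ drift, and the appeal to uniqueness at the end, but the argument is the same.
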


\begin{proof}
Set $Z_t:=z_tI_N$, and note that $z_t$ and $Z_t$ satisfy the stochastic differential equations
\[ dz_t = z_t \frac{idb_t^0}{N} - \frac{1}{2N^2}z_t\,dt 
	\quad \text{ and } \quad
	dZ_t = Z_t\,db_t - \frac{1}{2N^2}Z_t dt \]
where $b_t =b_t^0\xi_0$ with $\xi_0=iI_N/N$.
Let $\{\xi_j\}_{j=1}^{N^2-1}$ be an orthonormal basis of $\mathfrak{su}(N)$, and let $\{b_t^j\}_{j=1}^{N^2-1}$ be  independent real-valued Brownian motions.
Then $\tilde{W}_t= \sum_{j=1}^{N^2-1} b_t^j \xi_j$ is a Brownian motion on $\mathfrak{su}(N)$, and $V_t$ satisfies the stochastic differential equation
\[ dV_t = V_t\circ d\tilde{W}_t
	= V_t\,d\tilde{W}_t + \frac{1}{2}V_t\sum_{\xi\in\beta} \xi^2 \,dt
	= V_t\,d\tilde{W}_t - \left(\frac{N^2-1}{2N^2}\right)V_t\, dt. \]
(Here $\circ$ denotes a Stratonovich integral, which is then expressed as an It\^o integral via the usual calculus.)

Now, $\{\xi_j\}_{j=0}^{N^2-1}$ is an orthonormal basis of $\mathfrak{u}(N)$, and $z_tV_t=Z_tV_t\in \SUnitary{N}\rtimes \mathbb{U}(1)\simeq \mathbb{U}(N)$ satisfies
\begin{align*}
d(Z_tV_t) %&= (dZ_t)V_t + Z_t(dV_t) + (dZ_t)(dV_t) \\
	&= \left(Z_t db_t - \frac{1}{2N^2} Z_t\,dt\right)V_t + Z_t\left(V_t\,d\tilde{W}_t - \left(\frac{N^2-1}{2N^2}\right)V_t\right) 
		\\
	&= Z_tV_t\,(db_t +d\tilde{W}_t) - \frac{1}{2}Z_tV_t\,dt.
\end{align*}
Since $W_t=b_t+\tilde{W}_t$ is a Brownian motion on $\mathfrak{u}(N)$, this implies that $z_tV_t$ is a Brownian motion on $\mathbb{U}(N)$. 
\end{proof}

We use this realization of the Brownian motion on $\mathbb{U}(N)$
along with concentration properties of the laws of $z_t$ and $V_t$ to
obtain sub-Gaussian concentration independent of $t$ on
$\Unitary{N}$ for large $t$.

\begin{prop}\label{T:BM-U(N)-concentration}
Let $U_t$ be distributed according to heat kernel measure on
$\Unitary{N}$, and let $F:\Unitary{N}\to\R$ be $L$-Lipschitz.  For any $t,r>0$,
\[\P(|F(U_t)-\E F(U_t)|>r)\le 2e^{-\frac{r^2}{tL^2}}.\] 
Furthermore, there is a constant $C\in(0,\infty)$ such that for all $t\ge8(\log N)^2$ and $r>0$
\[\P\left(|F(U_t)-\E F(U_t)|>r\right)\le C e^{-\frac{r^2}{4L^2}}.\]

\end{prop}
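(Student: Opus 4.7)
The two bounds are handled separately; only the second requires the coupling.

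The first bound is the direct application of the Ricci-curvature-based heat-kernel concentration recalled just before the statement. Equipped with the bi-invariant $N$-scaled metric, $\mathbb{U}(N)$ is a compact Lie group whose Ricci tensor is nonnegative (strictly positive on the semisimple part $\mathfrak{su}(N)$, identically zero on the central line $\mathbb{R}\,iI$). The criterion with $k=0$ therefore gives the concentration coefficient $C(t)=t$, from which $\P(|F(U_t)-\E F(U_t)|>r)\le 2e^{-r^2/(tL^2)}$ for any $L$-Lipschitz $F$.

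For the improved bound in the regime $t\ge 8(\log N)^2$, I use Lemma~\ref{l.couple} to write $U_t=z_tV_t$ with $z_t=e^{ib_t^0/N}$ and $V_t$ an independent Brownian motion on $\mathbb{SU}(N)$, and split
\[
F(U_t)-\E F(U_t)
  = \bigl(F(z_tV_t)-\Phi(V_t)\bigr) + \bigl(\Phi(V_t)-\E\Phi(V_t)\bigr),
\]
where $\Phi(V):=\E\bigl[F(z_tV)\mid V_t=V\bigr]$. The first summand is centered in $z_t$ given $V_t$, and the second is a function of $V_t$ only. The two ingredients are:
\begin{enumerate}
\item A Killing-form computation gives $\mathrm{Ric}_{\mathbb{SU}(N)}\equiv \tfrac12$ in the scaled inner product. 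Hence the Ricci criterion with $2k=\tfrac12$ yields the \emph{time-uniform} concentration coefficient $8(1-e^{-t/8})\le 8$ on $\mathbb{SU}(N)$. Since averaging preserves Lipschitz constants, $\Phi$ is $L$-Lipschitz and the second summand obeys $\P(|\Phi(V_t)-\E\Phi(V_t)|>r)\le 2e^{-r^2/(8L^2)}$, uniformly in $t$.
\item The heat kernel on $\mathbb{S}^1$ satisfies the improved log-Sobolev inequality with coefficient $\min(t,C_0)$ for a universal $C_0$ (the content of the commented-out Proposition~\ref{p.lsi-circle}, which combines the flat-Ricci bound for small $t$ with an $L^\infty$-$L^2$ decay estimate for the heat kernel on the circle).
\end{enumerate}

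Combining these: the first summand, conditional on $V_t$, is an $L$-Lipschitz function of $z_t$ (via the isometric embedding of the central circle into $(\mathbb{U}(N),d_N)$), so ingredient~(2) applied with the scaling $z_t=e^{ib_t^0/N}$ produces a conditional subgaussian tail whose coefficient becomes independent of $t$ precisely at the threshold $t\ge 8(\log N)^2$. Taking a union bound and using the subgaussian triangle inequality combines the two pieces; the factor $4$ in the denominator $4L^2$ arises from $(2L)^2$ when combining two subgaussian components of parameter $\lesssim 2L^2$ each.

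\textbf{Main obstacle.} The delicate step is propagating the improved circle log-Sobolev through the $1/N$ rescaling in $z_t$ so as to kill the $t$-dependence in the first summand. A naive application of ingredient~(2) leaves a coefficient of order $t$, and the precise threshold $8(\log N)^2$ is exactly the window in which the improved gradient estimate for the heat kernel on $\mathbb{S}^1$ (with its $e^{-(t-2\pi^2-6)/4}$ decay in the relevant time scale) dominates the naive flat-Ricci bound after rescaling. Once this bookkeeping is in place, the remaining Ricci computation on $\mathbb{SU}(N)$ and the assembly of the two subgaussian tails are routine.
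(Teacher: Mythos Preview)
Your argument for the first inequality is the same as the paper's. The second inequality, however, does not go through as you describe.

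\textbf{The gap.} You condition on $V_t$ and attempt to control the $z_t$-fluctuation using the improved log-Sobolev inequality on the circle. The difficulty is one of scale. The central circle $\{e^{i\theta}I_N\}\subset\mathbb{U}(N)$, with the metric induced by $\langle\cdot,\cdot\rangle_N$, has circumference $2\pi N$, not $2\pi$: the tangent vector $iI_N$ has $\|iI_N\|_N^2=N\operatorname{tr}(I_N)=N^2$. Equivalently, $z_t=e^{ib^0_t/N}$ is Brownian motion on the \emph{unit} circle run at time $t/N^2$. The coefficient $\min(t',C_0)$ of Proposition~\ref{p.lsi-circle} therefore reads $\min(t/N^2,C_0)$; after multiplying by the squared Lipschitz constant $(NL)^2$ of $z\mapsto F(zV)$ on the unit circle (or, equivalently, by $L^2$ on the big circle with coefficient $N^2\min(t/N^2,C_0)$), you obtain a subgaussian coefficient $L^2\min(t,C_0N^2)$. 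This is $\sim tL^2$ throughout the entire window $8(\log N)^2\le t\lesssim N^2$, so the circle improvement buys nothing there. The threshold it actually produces is $t\gtrsim N^2$, not $t\gtrsim(\log N)^2$; there is no logarithm anywhere in the circle analysis.

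\textbf{What the paper does instead.} The paper conditions the other way, on $z_t$, so that the $\mathbb{SU}(N)$ Ricci bound handles the bulk uniformly in $t$. The remaining $z_t$-dependent piece $\E[F(z_tV_t)\mid z_t]-\E F(z_tV_t)$ is controlled not by concentration but by a \emph{deterministic} oscillation bound. The key observation is that if $V$ is Haar on $\mathbb{SU}(N)$, then $e^{2\pi i/N}V\overset{d}{=}V$, so $z\mapsto\E_V F(zV)$ is $\frac{2\pi}{N}$-periodic on the circle and hence oscillates by at most $2\pi L$. Replacing $V_t$ by Haar $V$ costs at most $LN\|h_t^{\mathbb{SU}(N)}-1\|_1\le LN\,e^{-t(1+o(1))/(8\log N)}$ by the Saloff-Coste mixing estimate, and it is precisely this bound that dictates the threshold $t\ge 8(\log N)^2$. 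Without the periodicity-from-Haar step, the $z_t$-piece cannot be made $t$-independent at the claimed threshold.
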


\begin{proof} 
To prove the first statement, observe that
since the Ricci curvature on $\Unitary{N}$ is nonnegative, the comments preceding Lemma \ref{l.couple} imply that the desired concentration estimate holds for 
$\rho_t^N$ with coefficient $C(t)=t$. That is, if $F:\Unitary{N}\to\R$ is $L$-Lipschitz with
with $\mathbb{E}|F|<\infty$, then 
\[\P(|F(U_t)-\E F(U_t)|>r)\le 2e^{-\frac{r^2}{tL^2}}.\] 

To prove the second statement, observe that the representation of $U_t$ in Lemma \ref{l.couple} implies that
\begin{equation}
\begin{split}
\label{E:condition-on-z}
\P\left(|F(U_t)-\E F(U_t)|>r\right) 
	&=\P\left(|F(z_tV_t)-\E F(z_tV_t)|>r\right)\\
	&\le\E\left[\P\left[\left.\Big|F(z_tV_t)-\E\big[F(z_tV_t)\big|z_t\big]\Big|>\frac{r}{2}\right|z_t\right]\right] \\
	&\qquad +\P\left(\left|\E\big[F(z_tV_t)\big|z_t\big]-\E F(z_tV_t)\right|>\frac{r}{2}\right).\end{split}\end{equation}

Now for the first term, measure concentration for $V_t$ follows again from curvature considerations:
following for example Proposition E.15 and Lemma F.27 of \cite{AGZ}, one may compute the Ricci curvature on $\SUnitary{N}$ with respect to the given inner product as
\[ \mathrm{Ric}(X,X) = \frac{1}{2}\langle X,X\rangle_N. \]
Thus, by the discussion preceding Lemma \ref{l.couple}, $\mathrm{Law}(V_t)$ on $\SUnitary{N}$ satisfies the following concentration estimate:
if $G:\SUnitary{N}\to\R$ is $L$-Lipschitz, then 
\[\P(|G(V_t)-\E G(V_t)|>r)\le 2e^{-\frac{r^2}{L^2c(t)}},\]
where $c(t):=4(1-e^{-t/4})$. 
For $z_t$ fixed, $G=F(z_t\cdot)$ is an $L$-Lipschitz function on $\SUnitary{N}$, and so  the first term of \eqref{E:condition-on-z} is bounded by $2e^{-\frac{r^2}{4L^2}}$.  

For the second term of \eqref{E:condition-on-z}, let $K=K(z_t)$ be the random variable taking values in $\{0,\ldots,N-1\}$ such that, on $\{K=k\}$, $z_t\in [e^{\frac{2\pi ik}{N}},e^{\frac{2\pi i(k+1)}{N}})$.  
Conditioning on $K$, we have 
\begin{align}
\notag\P&\left(\left|\E\big[F(z_tV_t)\big|z_t\big]-\E F(z_tV_t)\right|>\frac{r}{2}\right) \\
	&\notag=\E\left(\P\left[\left|\E\big[F(z_tV_t)\big|z_t\big]
		-\E F(z_tV_t)\right|>\frac{r}{2}\bigg|K\right]\right)\\
	&\label{e.K}\le\E\left(\P\left[\left|\E\big[F(z_tV_t)\big|z_t\big]
		-\E \big[F(z_tV_t)\big|K\big]\right|>\frac{r}{4}\bigg|K\right]\right)\\
	&\notag\qquad\qquad+\P\left(\left|\E\big[F(z_tV_t)\big|K\big]
		-\E F(z_tV_t)\right|>\frac{r}{4}\right).
\end{align}

To deal with the first term in \eqref{e.K}, let $\E_{V_t}$ denote integration over $V_t$ only, $\E_{z_t}$ integration over $z_t$ only, and let $\E_{z_t|K=k}$ denote integration over $z_t$ conditional on $K=k$.   Observe that by independence of $V_t$ and $z_t$
\begin{align*}
\left|\E\big[F(z_tV_t)\big|z_t\big]
		-\E \big[F(z_tV_t)\big|K=k\big]\right|&=\left|\E_{V_t}\left[F(z_tV_t)\right]-\E_{V_t}\E_{z_t|K=k}\left[F(z_tV_t)\right]\right|\\&\le\E_{V_t}\left|F(z_tV_t)-\E_{z_t|K=k}\left[F(z_tV_t)\right]\right| \\
	&= \int_{\SUnitary{N}} |F(z_tV) - \mathbb{E}_{z_t|K=k}[F(z_t
          V)]| \,dh^{\SUnitary{N}}_t(V),\end{align*}
where $h^{\SUnitary{N}}_t$ denotes the density of $V_t$ with respect to
Haar measure on $\SUnitary{N}$.
Now, for $V$ fixed, $F(\cdot V)$ is an $NL$-Lipschitz function on $\Circle$. So, conditioned on $K=k$, $F(z_tV)$ can only fluctuate by as much as $2\pi L$.  Thus if $\frac{r}{4}>2\pi L$, the first term is zero. For $\frac{r}{4}\le2\pi L$, we may just use the trivial bound of 1 and choose $C$ in the statement of the proposition so that $C\ge e^{(8\pi)^2/4}$.

To deal with the second term in \eqref{e.K}, note that we can replace $V_t$ with a
Haar-distributed random matrix $V$ for $t$ sufficiently large. Indeed,
letting $dV$ denote integration with respect to Haar measure on
$\SUnitary{N}$, and assuming without loss in generality that $F(I_N)=0$,
\begin{equation}
\label{eC}
\begin{split}
\left|\E\left[F(z_tV_t)-F(z_tV)\big|z_t\right]\right|&\le\int_{\SUnitary{N}}\big|F(z_tV)\big|\big|h^{\SUnitary{N}}_t(V)-1\big|dV\\&\le LN\|h^{\SUnitary{N}}_t-1\|_1,
\end{split}\end{equation}
since the diameter of $\Unitary{N}$ is $N$. A sharp estimate of the time to equilibrium of $V_t$ was proved in Theorem 1.2 of \cite{SC1994b}, from which it follows (see the discussion preceding the theorem in \cite{SC1994b}, and note that the normalization here differs by a factor of 2 from the one used there) that
\begin{equation}
\label{eD}
\|h^{\SUnitary{N}}_t-1\|_1\le e^{-\frac{t(1+o(1))}{8\log N}}.
\end{equation}
Thus if $t\ge8(\log N)^2$, replacing $V_t$ by $V$ will only affect the
constants.

Consider therefore  
\[\P\left[\Big|\E\left[F(z_tV)\big|K\right]-\E[F(z_tV)]\Big|>\frac{r}{4}\right],\]
and write $z_t=\omega_te^{\frac{2\pi i K}{N}}$, with $\omega_t$ in
the arc from $1$ to $e^{\frac{2\pi i}{N}}$.

Observe that, by Fubini's theorem and the translation invariance of Haar measure on
$\SUnitary{N}$, 
\[\E[F(z_tV)]=\E_{z_t}\E_V[F(\omega_te^{\frac{2\pi i
    K}{N}}V)]=\E_{z_t}\E_V[F(\omega_tV)]=\E_V\E_{z_t}[F(\omega_tV)],\]
and similarly 
\begin{align*}
\E[F(z_tV)|K=k]
	&=\E_{z_t|K=k}\E_V[F(\omega_te^{\frac{2\pi i K}{N}}V)] \\
	&=\E_{z_t|K=k}\E_V[F(\omega_tV)]
	=\E_V \E_{z_t|K=k}[F(\omega_tV)].
\end{align*}
Thus 
\[\Big|\E\left[F(z_tV)\big|K\right]-\E[F(z_tV)]\Big|\le\E_V\Big|\E_{z_t}[F(\omega_tV)]-\E_{z_t|K}[F(\omega_tV)]\Big|\le2\pi
L,\]
where we have used again that for fixed $V$, $F(\omega V)$ is an
$NL$-Lipschitz function of $\omega$, and here $\omega$ lies within an arc of length $\frac{2\pi}{N}$. The estimate now follows as in the first term.
\end{proof}

\section{Concentration of $\mu_t^N$}
\label{s.concentration}

Armed with the concentration inequality for heat kernel measure, the
proof of Theorem \ref{T:main} is an application of the program laid out in
\cite{MM2016} for estimating the Wasserstein distance between the
empirical spectral measure of a random matrix and the ensemble
average, in the presence of measure concentration. Since it is relatively brief, we include the detailed argument here for completeness.

The first step is to bound the ``average distance to average'' $\E
W_1(\mu^N_t,\overline{\mu}^N_t)$ as follows.

\begin{prop}\label{T:avg-to-avg}
There is a constant $c\in(0,\infty)$ such that for all $N\in\mathbb{N}$ and $t>0$
\[\E W_1(\mu^N_t,\overline{\mu}^N_t)\le
c\left(\frac{t}{N^2}\right)^{1/3},\]
and for all $N\in\mathbb{N}$ and  $t\ge8(\log N)^2$
\[\E W_1(\mu^N_t,\overline{\mu}^N_t)\le
\frac{c}{N^{2/3}}.\]
\end{prop}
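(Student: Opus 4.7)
The plan is to apply the Kantorovich--Rubenstein dual form of $W_1$ together with the concentration estimates of Proposition \ref{T:BM-U(N)-concentration}, in the standard manner of the program of \cite{MM2016}. The starting point is the observation that for any $1$-Lipschitz function $f$ on $\mathbb{S}^1$, the map $U \mapsto \int f\,d\mu_U = \frac{1}{N}\sum_{j=1}^N f(e^{i\theta_j})$ is $\frac{1}{N}$-Lipschitz on $\Unitary{N}$ with respect to the Riemannian metric induced by $\langle\cdot,\cdot\rangle_N$. This follows from the Hoffman--Wielandt inequality (the sum of squared eigenvalue gaps of unitary matrices is bounded by $\|U-V\|_{HS}^2$) plus Cauchy--Schwarz, combined with the fact that $d_{\mathrm{Rie}}(U,V)\ge \|U-V\|_N=\sqrt N\,\|U-V\|_{HS}$.

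With that Lipschitz bound in hand, applying Proposition \ref{T:BM-U(N)-concentration} with $L=1/N$ yields, for each fixed $1$-Lipschitz $f$ and $Z_f := \int f\,d\mu^N_t - \int f\,d\overline{\mu}^N_t$,
\[
\P\bigl(|Z_f|>r\bigr)\le 2\,e^{-N^2r^2/t}\qquad\text{and, for } t\ge 8(\log N)^2,\ \P\bigl(|Z_f|>r\bigr)\le C\,e^{-N^2r^2/4}.
\]
To pass from pointwise concentration to a bound on $W_1 = \sup_{|f|_L\le 1} Z_f$, I would cover the space of $1$-Lipschitz functions on $\mathbb{S}^1$ (modulo additive constants, which do not affect $Z_f$) by a finite $\varepsilon$-net $\mathcal{F}_\varepsilon$ in sup-norm, for instance by piecewise-linear interpolants on a grid of size $\lceil 1/\varepsilon\rceil$, giving $\log|\mathcal{F}_\varepsilon|\le C/\varepsilon$. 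Then
\[
W_1(\mu^N_t,\overline{\mu}^N_t)\le \max_{f\in\mathcal{F}_\varepsilon}|Z_f| + 2\varepsilon,
\]
and the standard sub-Gaussian maximal inequality bounds $\E\max_{f\in\mathcal{F}_\varepsilon}|Z_f|$ by $C\sqrt{t/N^2}\cdot\sqrt{\log|\mathcal{F}_\varepsilon|}\le C\sqrt{t/(N^2\varepsilon)}$ in the small-$t$ regime, and by $C/(N\sqrt{\varepsilon})$ in the large-$t$ regime.

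Optimizing yields $\varepsilon\sim(t/N^2)^{1/3}$ in the first regime, producing the bound $c(t/N^2)^{1/3}$, and $\varepsilon\sim N^{-2/3}$ in the second regime, producing $c/N^{2/3}$. The calculation is parallel in both cases; only the sub-Gaussian variance proxy changes.

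I expect no conceptual obstacle: the Lipschitz computation, the net construction, and the maximal inequality are all textbook, and the only real work is tracking the two different variance proxies from Proposition \ref{T:BM-U(N)-concentration} through the optimization. If there is any subtlety it is the verification that an approximation in sup-norm by piecewise linear functions produces the right $C/\varepsilon$ entropy bound while remaining contained in (a constant multiple of) the $1$-Lipschitz class; this is routine but must be handled cleanly to avoid lossy constants.
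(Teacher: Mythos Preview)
Your proposal is correct and follows essentially the same route as the paper: both use the $\frac{1}{N}$-Lipschitz bound for $U\mapsto\int f\,d\mu_U$, feed it into the concentration of Proposition~\ref{T:BM-U(N)-concentration}, discretize the $1$-Lipschitz test functions by piecewise-linear approximants on a mesh of size $m\asymp 1/\varepsilon$, and optimize the resulting tradeoff between the approximation error and the stochastic term. The only packaging difference is that the paper controls the supremum via the sub-Gaussian \emph{increment} condition in the Lipschitz norm and a Dudley-type bound for processes indexed by the unit ball of an $(m-1)$-dimensional space (yielding a factor $\sqrt{m}$), whereas you use pointwise concentration, a sup-norm $\varepsilon$-net, and the standard maximal inequality (yielding a factor $\sqrt{\log|\mathcal F_\varepsilon|}\asymp\sqrt{1/\varepsilon}$); these give the same balance and the same $1/3$ exponent.
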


\begin{proof}We will give the proof of the first statement only, which applies the first half of Proposition \ref{T:BM-U(N)-concentration}; the proof of the second statement is identical using only instead the second half of Proposition \ref{T:BM-U(N)-concentration}.

Recall that
\[W_1(\mu^N_t,\overline{\mu}^N_t)=\sup_{|f|_L\le 1}\left(\int fd\mu^N_t-\int
fd\overline{\mu}^N_t\right),\]
where $|f|_L\le 1$.
That is, our task is to estimate the expected supremum of the centered stochastic
process $\{X_f\}_{|f|_L\le 1}$, with 
\[X_f:=\int fd\mu^N_t-\int
fd\overline{\mu}^N_t=\int fd\mu^N_t-\E\int
fd\mu^N_t.\]
Note that without loss we may choose the indexing set to be 1-Lipschitz functions
on the circle with $f(1)=0$; write $Lip_0(1)$ for the set of all such functions.
Now, if $f$ is a fixed Lipschitz function and $\mu_U$ denotes the spectral measure of $U$, then 
\[U\longmapsto \left(\int fd\mu_U-\int fd\overline{\mu}^N_t\right)\]
is $\frac{|f|_L}{N}$-Lipschitz (see Lemma 2.3 of \cite{MM-PTRF-2013},
and note the different normalization of the metric on matrices),
and so by Proposition 
\ref{T:BM-U(N)-concentration}, 
\[\Prob\left(|X_f-X_g|>x\right)=\Prob\left(|X_{f-g}|>x\right)\le 2e^{-\frac{N^2x^2}{t|f-g|_L^2}}.\]
That is, the stochastic process $\{X_f\}_{f\in Lip_0(1)}$ satisfies a
sub-Gaussian increment condition.

Now, if $\{X_v\}_{\|v\|=1}$ is a centered stochastic process indexed
by the unit ball of a \emph{finite-dimensional} normed space $V$, and
$\{X_v\}$ satisfies the increment condition 
\[\Prob(|X_u-X_v|>x)\le a e^{-\frac{x^2}{K^2\|u-v\|^2}}\]
for each $x>0$, then it is a consequence of Dudley's entropy bound
(see \cite{MM2016} for a detailed proof) that
\begin{equation}\label{E:fd-sup-bound}\E\left(\sup_{\|v\|=1}X_v\right)\le aK\sqrt{\dim V}.\end{equation}

The index set $Lip_0(1)$ is the unit ball of an infinite-dimensional
normed space, but Lipschitz test functions may be approximated by
piecewise linear functions coming from a finite-dimensional space.
Specifically, for $m\in\N$, let $A_0^{(m)}$ be the set of
$f:[0,2\pi]\to\R$ such that 
\begin{itemize}
\item $f(0)=f(2\pi)=0$,
\item $|f|_L\le 1$, and 
\item $f$ is piecewise linear, with changes in slope occurring only at
  the values
  $\frac{2\pi k}{m}$, $1\le k\le m-1$.  
\end{itemize}

For any $f\in Lip_0(1)$, there is $f^{(m)}\in
A^{(m)}_0$ such that $\|f-f^{(m)}\|_\infty\le \frac{\pi}{m}$, and so
\[\left|X_f-X_{f^{(m)}}\right|=\left|\int(f-f^{(m)})d\mu^N_t-\int(f-f^{(m)})d\overline{\mu}^N_t\right|\le\frac{2\pi}{m}.\]
The space of functions for which $A^{(m)}_0$ is the unit ball is
$(m-1)$-dimensional, and so it follows from \eqref{E:fd-sup-bound}
that
\begin{align*}
\E\left(\sup_{f\in Lip_0(1)}X_f\right)
	&\le \frac{2\pi}{m}+\E\left(\sup_{f\in A^{(m)}_0}X_f\right)\\
	&\le\frac{2\pi}{m}+C'\left(\frac{\sqrt{t}}{N}\right)\sqrt{m-1}.\end{align*}
Choosing $m=\left(\frac{N^2}{t}\right)^{1/3}$ completes the proof.

\end{proof}

The proof of Theorem \ref{T:main} is completed via the concentration
of $W_1(\mu^N_t,\overline{\mu}^N_t)$ about its mean, as follows.

\begin{prop} 
\label{T:W_1-concentration}
For all $t>0$, $N\in\mathbb{N}$,  and $x>0$,
\[\P\left(W_1(\mu^N_t,\overline{\mu}^N_t)>\E
  W_1(\mu^N_t,\overline{\mu}^N_t)+x\right)
	\le 2e^{-\frac{N^2x^2}{t}},\]
and there exists $C\in(0,\infty)$ such that for all $t\ge 8(\log N)^2$, $N\in\mathbb{N}$, and $x>0$,
\[\P\left(W_1(\mu^N_t,\overline{\mu}^N_t)>\E
  W_1(\mu^N_t,\overline{\mu}^N_t)+x\right)
	\le C e^{-N^2x^2/4}.\]
\end{prop}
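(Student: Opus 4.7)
The plan is to reduce the proposition to a direct application of the two concentration bounds in Proposition \ref{T:BM-U(N)-concentration}, by showing that the random variable $W_1(\mu^N_t,\overline{\mu}^N_t)$ is a Lipschitz function of $U^N_t$ with Lipschitz constant $1/N$ on $\mathbb{U}(N)$ (equipped with the Riemannian metric induced by $\langle \cdot,\cdot\rangle_N$).

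First, I would use the Kantorovich--Rubinstein duality to write
\[
W_1(\mu_U, \overline{\mu}^N_t) \;=\; \sup_{|f|_L \le 1}\left(\int f\,d\mu_U - \int f\,d\overline{\mu}^N_t\right),
\]
so that $W_1(\mu_U, \overline{\mu}^N_t)$ is a supremum, over a family of test functions, of maps $U \mapsto \int f\,d\mu_U - \int f\,d\overline{\mu}^N_t$, each of which differs from $U \mapsto \int f\,d\mu_U$ only by an additive constant. Next, I would invoke the observation already used in the proof of Proposition \ref{T:avg-to-avg} (via Lemma~2.3 of \cite{MM-PTRF-2013}, keeping track of the scaling $\langle U,V\rangle_N = N\tr(UV^*)$) that for any fixed $1$-Lipschitz $f$, the map $U \mapsto \int f\,d\mu_U = \frac{1}{N}\sum_{j=1}^N f(e^{i\theta_j(U)})$ is $\frac{1}{N}$-Lipschitz on $\mathbb{U}(N)$. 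Taking a supremum of $\frac{1}{N}$-Lipschitz functions preserves the Lipschitz constant, so $U \mapsto W_1(\mu_U,\overline{\mu}^N_t)$ is $\frac{1}{N}$-Lipschitz.

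With the Lipschitz bound $L = 1/N$ in hand, both claims follow immediately by applying Proposition~\ref{T:BM-U(N)-concentration} to $F(U) = W_1(\mu_U, \overline{\mu}^N_t)$: the first bound of that proposition gives
\[
\mathbb{P}\bigl(W_1(\mu^N_t, \overline{\mu}^N_t) > \mathbb{E} W_1(\mu^N_t, \overline{\mu}^N_t) + x\bigr) \le 2 e^{-N^2 x^2 / t}
\]
for all $t,x>0$, and the second bound gives the corresponding estimate $C e^{-N^2 x^2 / 4}$ in the regime $t \ge 8(\log N)^2$.

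There is no serious obstacle here; the only point requiring care is the normalization of the metric on $\mathbb{U}(N)$, since the factor of $N$ in $\langle \cdot,\cdot\rangle_N$ is precisely what turns the naive $1$-Lipschitz constant for $U \mapsto \frac{1}{N}\sum_j f(e^{i\theta_j})$ (with respect to the unscaled Hilbert--Schmidt metric, where it is $\frac{1}{\sqrt N}$-Lipschitz by a Hoffman--Wielandt argument) into the $\frac{1}{N}$-Lipschitz bound used above. Once this bookkeeping is done, the proposition reduces to a one-line application of Proposition~\ref{T:BM-U(N)-concentration}.
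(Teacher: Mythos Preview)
Your proposal is correct and follows essentially the same approach as the paper: define $F(U)=W_1(\mu_U,\overline{\mu}^N_t)$, observe that it is $\frac{1}{N}$-Lipschitz on $\Unitary{N}$ (via Lemma~2.3 of \cite{MM-PTRF-2013}), and apply Proposition~\ref{T:BM-U(N)-concentration}. The paper's proof is terser---it simply asserts the Lipschitz bound by citation---whereas you spell out the Kantorovich--Rubinstein step and the metric normalization, but the argument is the same.
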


\begin{proof}
Again, we prove only the first statement and the proof of the second is analogous.
 
Consider the mapping $F:\Unitary{N}\to\R$ given by
\[F(U)=W_1(\mu_U,\overline{\mu}^N_t),\]
where $\mu_U$ is the spectral measure of $U$ and $\overline{\mu}^N_t$ is
the ensemble-averaged empirical spectral measure of $U^N_t$ as before.
The function $F$ is a $\frac{1}{N}$-Lipschitz function of $U$ (again,
see Lemma 2.3 of \cite{MM-PTRF-2013}), and so by Proposition
\ref{T:BM-U(N)-concentration}, for all $t>0$ and all $x>0$,
\[\Prob\left(W_1(\mu^N_t,\overline{\mu}^N_t)-\E
  W_1(\mu^N_t,\overline{\mu}^N_t)>x\right)\le
2e^{-\frac{N^2x^2}{t}}.\]
\end{proof}

From the tail estimate of Proposition \ref{T:W_1-concentration}
together with Proposition \ref{T:avg-to-avg}, it follows that for any
$t,x>0$, 
\[\Prob\left(W_1(\mu^N_t,\overline{\mu}^N_t)> c\left(\frac{t}{N^2}\right)^{1/3} + x\right)\le
2e^{-\frac{N^2x^2}{t}}.\]
In particular, an
application of the Borel--Cantelli lemma with
$x_N=c\left(\frac{t}{N^2}\right)^{1/3}$ completes the proof of
the first statement of Theorem \ref{T:main}. The second statement
follows in the same way.

\section{Convergence to $\nu_t$}
\label{s.nut}

The previous section established a bound on the distance between the (random) spectral measure $\mu^N_t$ and the ensemble average $\overline{\mu}^N_t$.  The picture is completed by obtaining a rate of convergence of $\overline{\mu}^N_t$ to the limiting measure $\nu_t$.   The following is relevant for moderate $t$.

\begin{thm}\label{T:avg-to-limit} There is a constant $C\in(0,\infty)$ such that for all  $N\in\mathbb{N}$ and $t>0$
\[ W_1(\overline{\mu}^N_t,\nu_t) \le C\frac{t^{2/5}\log N}{N^{2/5}} .\]
\end{thm}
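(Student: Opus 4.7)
My approach is to combine classical Fourier approximation on $\mathbb{S}^1$ with the moment-convergence estimates for the unitary Brownian motion proved in \cite{CDK2016}. I would begin by using the Kantorovich--Rubinstein dual representation of $W_1$ to reduce the problem to controlling
\[
\Delta_N(t;f) := \left|\int_{\Circle} f\, d\overline{\mu}^N_t - \int_{\Circle} f\, d\nu_t \right|
\]
uniformly over $1$-Lipschitz $f\colon\Circle\to\R$. Writing $m_k(\mu):=\int z^k\,d\mu$ for the $k$th moment, $\Delta_N(t;f)$ depends on $f$ only through its Fourier coefficients, paired with the moment differences $m_k(\overline{\mu}^N_t)-m_k(\nu_t)$.

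Next, I would fix a cutoff $K\in\N$ (to be optimized later) and approximate $f$ by its Fourier partial sum $S_Kf(z)=\sum_{|k|\le K}\hat f(k)z^k$. Since $f$ is $1$-Lipschitz, integration by parts gives $|\hat f(k)|\le 1/|k|$ for $k\neq 0$, while the standard Dirichlet-kernel (Lebesgue-constant) estimate yields $\|f-S_Kf\|_\infty\le C(\log K)/K$. Using that $\hat f(0)$ cancels, the triangle inequality produces
\[
\Delta_N(t;f)\le \frac{2C\log K}{K}+\sum_{0<|k|\le K}\frac{1}{|k|}\bigl|m_k(\overline{\mu}^N_t)-m_k(\nu_t)\bigr|.
\]

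The crucial input is now the moment-convergence estimate of \cite{CDK2016}, which bounds $|m_k(\overline{\mu}^N_t)-m_k(\nu_t)|$ by a polynomial expression in $k$ and $t$ divided by $N^2$; schematically of the form $Ck^4t^2/N^2$. Substituting this bound and summing, the second term above is of order $t^2K^4/N^2$, so
\[
\Delta_N(t;f)\lesssim \frac{\log K}{K}+\frac{t^2K^4}{N^2}.
\]
Optimizing by choosing $K\sim (N^2/t^2)^{1/5}$ balances the two contributions and yields the claimed rate $C\,t^{2/5}\log N/N^{2/5}$.

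The main obstacle will be extracting the moment estimate from \cite{CDK2016} in precisely the form needed: the polynomial exponents in $k$ and $t$ have to match those appearing in the optimization in order to produce the $2/5$ rate, and some care is needed to track how the $t$-dependence propagates through the sum. The $\log N$ factor is a natural artifact of the Lebesgue constant of the Dirichlet kernel and could in principle be removed by replacing $S_Kf$ with a smoother approximation such as a Fej\'er or de la Vall\'ee Poussin mean with finer coefficient analysis, but the current form is already sharp enough for the almost-sure applications leading to Theorem \ref{T:close-paths}.
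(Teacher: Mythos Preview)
Your proposal is correct and follows essentially the same route as the paper's proof: Fourier partial-sum approximation of the Lipschitz test function, the Lebesgue--Jackson bound $\|f-S_Kf\|_\infty\le C(\log K)/K$, the CDK moment estimate $|m_k(\overline{\mu}^N_t)-m_k(\nu_t)|\le t^2k^4/N^2$, and optimization of the cutoff at $K\sim (N/t)^{2/5}$. The only cosmetic difference is that the paper cites Lebesgue's theorem and Jackson's theorem separately (via \cite{Rivlin1981}) rather than packaging them as a single ``Lebesgue-constant'' estimate.
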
 

\begin{proof}
The proof is via Fourier analysis and classical approximation theory, following the approach of Theorem 2.1 in
\cite{MM-PTRF-2013}.
The key ingredient of this proof is the bound (\ref{e.cdk}) below, which was proved in \cite{CDK2016}.

Let
\[ S_m(z) := \sum_{|k|<m} \hat{f}(k)z^k, \]
and observe that 
	\[\int z^kd\overline{\mu}^N_t=\frac{1}{N}\E[\tr(U_t^k)]\]
where $U_t$ is a Brownian motion on $\Unitary{N}$.
Given $f:\mathbb{S}^1\rightarrow\mathbb{R}$ a 1-Lipschitz function, it
is known that $|\hat{f}(k)|\le \frac{C}{k}$ for $k\ge 1$ (in fact, $C=\frac{\pi}{2}$; see, for
example, Theorem 4.6 of \cite{Katznelson2004}), and so
\begin{align*}
\left|\int S_m\,d\overline{\mu}^N_t -\int S_m\,d\nu_t\right|
	&= \left|\sum_{1\le|k|<m} \hat{f}(k)
\left(\frac{1}{N}\mathbb{E}[\mathrm{tr} (U_t^k)] - \int z^k\,d\nu_t\right) \right| \end{align*}
Now, by Theorem 1.3 of \cite{CDK2016}, for $t$ and $k$ fixed,
\begin{equation}
\label{e.cdk}
	\left|\frac{1}{N}\E[\tr(U_t^k)]-\int z^kd\nu_t\right|\le\frac{t^2k^4}{N^2}.
\end{equation}
Thus,
\begin{align*}
\left|\int S_m\,d\overline{\mu}^N_t -\int S_m\,d\nu_t\right|
	&\le C\sum_{1\le|k|<m}\frac{1}{k}\frac{t^2k^4}{N^2}
	\le C\frac{t^2m^4}{N^2}.
\end{align*}

The proof now proceeds exactly as in Theorem 2.1 of \cite{MM-PTRF-2013}. A theorem of Lebesgue implies that
\[ \|f-S_m\|_\infty \le C'\log m\left(\inf_g \|f-g\|_\infty\right) \]
where the infimum is over all trigonometric polynomials $g(z)=\sum_{|k|<m}a_kz^k$; see for example Theorem 2.2 of \cite{Rivlin1981}. Combining this with Jackson's theorem (Theorem 1.4 of the same reference) implies that $\|f-S_m\|_\infty\le C'\frac{\log m}{m}$, and thus
\begin{align*}
\left|\int f\,d\overline{\mu}^N_t-\int f\,d\nu_t\right|
	&\le \left|\int f\,d\overline{\mu}^N_t-\int S_m\,d\overline{\mu}^N_t\right| + \left|\int S_m\,d\overline{\mu}^N_t-\int S_m\,d\nu_t\right| \\
	& \qquad + \left|\int S_m\,d\nu_t-\int f\,d\nu_t\right|\\
	&\le C''\left(\frac{\log m}{m} + \frac{t^2m^4}{N^2}\right).
\end{align*}
Choosing $m=(N/t)^{2/5}$ then gives the stated bound.
\end{proof}

The bound above decays if and only if $t=o(N/((\log N)^{5/2}))$.  But for sufficiently large $t$, both $\overline{\mu}^N_t$ and $\nu_t$ are close to the uniform measure on the circle.  This is not reflected in the bound above, which gets worse for large $t$.  The following propositions treat the large $t$ case by appealing to convergence to stationarity.

\begin{prop}\label{T:avg-to-uniform}
	Let $\overline{\mu}^N_t$ denote the ensemble-averaged spectral measure of a random matrix $U_t$ distributed according to heat kernel measure on $\Unitary{N}$, and let $\nu$ denote the uniform probability measure on $\Circle$.  There are  constants $C,c\in(0,\infty)$ so that for all $N\in\mathbb{N}$ and $t>0$
\[W_1(\overline{\mu}^N_t,\nu)\le e^{-\frac{t(1+o(1))}{8\log(N)}}+\frac{2\pi}{N}.\]  
\end{prop}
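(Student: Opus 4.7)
The plan is to use the coupling from Lemma \ref{l.couple} to transfer the problem to $\mathbb{S}^1 \times \mathbb{SU}(N)$, apply the convergence-to-Haar bound (\ref{eD}) to replace the $\mathbb{SU}(N)$ factor by Haar measure, and finally exploit the $2\pi/N$-rotational symmetry of the Haar spectral measure on $\mathbb{SU}(N)$ together with the identification $\nu = \overline{\mu}_U$ for $U$ Haar on $\mathbb{U}(N)$.

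First, by Lemma \ref{l.couple}, realize $U_t = z_tV_t$ with $V_t$ a Brownian motion on $\mathbb{SU}(N)$ independent of $z_t = e^{ib_t^0/N}$.  Introduce an auxiliary $V^*$ that is Haar-distributed on $\mathbb{SU}(N)$ and independent of $z_t$, and let $\widetilde{\mu}^N_t := \E[\mu_{z_tV^*}]$.  By Kantorovich--Rubinstein, for any $1$-Lipschitz $f:\mathbb{S}^1\to\mathbb{R}$ normalized so that $f(1)=0$ (hence $\|f\|_\infty\le 2$), the quantity $g_z(V):=\frac{1}{N}\sum_j f(z\lambda_j(V))$ satisfies
\[\int f\,d\overline{\mu}^N_t - \int f\,d\widetilde{\mu}^N_t \;=\; \E_{z_t}\int_{\mathbb{SU}(N)} g_{z_t}(V)\bigl(h^{\mathbb{SU}(N)}_t(V)-1\bigr)\,dV,\]
which is bounded in absolute value by $\|f\|_\infty\|h^{\mathbb{SU}(N)}_t-1\|_1$. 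Combined with (\ref{eD}) and the fact that the constant is absorbed into the $o(1)$, taking the supremum over $f$ gives
\[W_1(\overline{\mu}^N_t,\widetilde{\mu}^N_t)\le e^{-t(1+o(1))/(8\log N)}.\]

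Next I show $W_1(\widetilde{\mu}^N_t,\nu)\le 2\pi/N$.  The key observation is that $\overline{\mu}_{V^*}$ is invariant under rotation by $2\pi/N$: indeed $\omega I_N\in\mathbb{SU}(N)$ for $\omega=e^{2\pi i/N}$, so by translation invariance of Haar measure $\omega V^*\stackrel{d}{=}V^*$, hence $\omega\cdot\overline{\mu}_{V^*}=\overline{\mu}_{V^*}$. Moreover, a Haar-distributed matrix on $\mathbb{U}(N)$ can be realized as $wV^*$ with $w$ uniform on the arc $\{e^{i\theta}:\theta\in[0,2\pi/N)\}$ independent of $V^*$, so taking ensemble-averaged spectral measures yields
\[\nu=\frac{N}{2\pi}\int_0^{2\pi/N}e^{i\theta}\cdot\overline{\mu}_{V^*}\,d\theta.\]
For any $z=e^{i\theta_0}\in\mathbb{S}^1$, the rotation invariance lets us assume $\theta_0\in[0,2\pi/N)$, and coupling by rotation gives
\[W_1(z\cdot\overline{\mu}_{V^*},\nu)\le\frac{N}{2\pi}\int_0^{2\pi/N}|\theta_0-\theta|\,d\theta\le\frac{2\pi}{N}.\]
Since $\widetilde{\mu}^N_t=\E_{z_t}[z_t\cdot\overline{\mu}_{V^*}]$ by independence, convexity of $W_1$ in its first argument gives $W_1(\widetilde{\mu}^N_t,\nu)\le 2\pi/N$. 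The triangle inequality then yields the proposition.

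The main step requiring care is the rotational-symmetry/averaging argument that gives the $2\pi/N$ piece: one must identify $\nu$ with the $\overline{\mu}_{V^*}$-average over a single fundamental arc of the $\mathbb{Z}_N$-action, rather than over all of $\mathbb{U}(1)$, since $z_t$ need not be anywhere near uniform on $\mathbb{S}^1$ (it traces out only $e^{ib_t^0/N}$ with $t/N^2$ possibly small). Everything else is a fairly direct application of the coupling from Lemma \ref{l.couple} together with the classical bound (\ref{eD}).
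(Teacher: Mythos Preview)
Your proof is correct and uses essentially the same ingredients as the paper: the coupling $U_t=z_tV_t$ from Lemma~\ref{l.couple}, the $L^1$ convergence of the $\SUnitary{N}$ heat kernel to Haar from \eqref{eD}, and the $\Z_N$-rotational symmetry coming from $e^{2\pi i/N}I_N\in\SUnitary{N}$. The only difference is organizational: the paper first bounds $W_1(U_t,U)$ on $\Unitary{N}$ via the Kantorovich--Rubinstein duality for Lipschitz $F:\Unitary{N}\to\R$ and then passes to spectral measures using the $\tfrac1N$-Lipschitz property of $U\mapsto\int f\,d\mu_U$, whereas you work directly at the level of measures on $\Circle$, introducing the intermediate measure $\widetilde{\mu}^N_t=\E_{z_t}[z_t\cdot\overline{\mu}_{V^*}]$ and using convexity of $W_1$. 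Your route is arguably a bit cleaner since it avoids the detour through $\Unitary{N}$ and the factor-of-$N$ bookkeeping, at the cost of a harmless constant ($\|f\|_\infty\le 2$ rather than $1$) in front of $\|h^{\SUnitary{N}}_t-1\|_1$.
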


\begin{proof}
First recall again that, as in the proof of Proposition \ref{T:avg-to-avg}, if $\mu_U$ denotes the spectral measure of $U$, then for fixed $f:\Circle\to\R$ with $|f|_L\le 1$, the function 
\[F(U)=\int fd\mu_U\]
is $\frac{1}{N}$-Lipschitz on $\Unitary{N}$.  Since $\nu$ is the spectral measure of a Haar-distributed random unitary matrix $U$ on $\Unitary{N}$, this means that 
\[\int fd\mu^N_t-\int fd\nu\le\frac{\|U_t-U\|_{N}}{N},\]
where $\|\cdot\|_N$ is the norm induced by the scaled inner product $\langle\cdot,\cdot\rangle_N$, and this holds for any coupling $(U_t,U)$ of heat kernel measure and Haar measure.  Taking expectation gives
\[\int fd\overline{\mu}^N_t-\int fd\nu=\E\left(\int fd\mu^N_t-\int fd\nu\right)\le\frac{\E \|U_t-U\|_{N}}{N}.\]
Taking the supremum over $f$ gives that
\[W_1(\overline{\mu}^N_t,\nu)\le\frac{\E\|U_t-U\|_{N}}{N},\]
and now taking infimum over couplings we have
\begin{equation}
\label{eB}
W_1(\overline{\mu}^N_t,\nu)\le\inf_{(U_t,U)}\frac{\E\|U_t-U\|_{N}}{N}=\frac{1}{N}W_1(U_t,U).
\end{equation}

Now consider the coupling $U_t\overset{d}{=}z_tV_t$ from Lemma
\ref{l.couple}, where $z_t=e^{ib_t^0/N}$ for $b^0_t$ a standard
Brownian motion on $\R$ and $V_t$ an independent Brownian motion on $\SUnitary{N}$
with $V_0=I_N$. One can similarly obtain Haar measure on the unitary
group from uniform measure on an interval and Haar measure on $\SUnitary{N}$: if
$z=e^{i\theta/N}$ with $\theta$ uniform in $[0,2\pi)$ and $V$ is independent of $\theta$ and
distributed according to Haar measure on $\SUnitary{N}$, then $zV$ is
distributed according to Haar measure on $\Unitary{N}$; see for example Lemma 16 of \cite{MM-ECP-2013}.  Moreover, by
the translation invariance of Haar measure, $\theta$ could also be
distributed uniformly on $[2\pi k,2\pi(k+1))$ for any $k\in\Z$, or
indeed be distributed according to any mixture of uniform measure on
such intervals, as long as the mixing measure is independent of $V$.

Given any such $z_t$, $z$, $V_t$, and $V$, 
for any $F:\Unitary{N}\to\R$ a 1-Lipschitz function, we have that 
\begin{multline}\label{E:coupled-differences}
	\big|\E F(U_t)-\E F(U)\big|=\big|\E F(z_tV_t)-\E F(zV)\big|\\
	\le\E\left|\E\left[F(z_tV_t)-F(z_tV)\Big|z_t\right]\right|+\left|\E\left[F(z_tV)-F(zV)\right]\right|\end{multline}
The first term of \eqref{E:coupled-differences} was already bounded in
the course of the proof of Proposition \ref{T:BM-U(N)-concentration}: 
\[\E\left|\E\left[F(z_tV_t)-F(z_tV)\Big|z_t\right]\right|\le Ne^{-\frac{t(1+o(1))}{8\log(N)}}.\]

To treat the second term, we may as in the proof of Proposition \ref{T:BM-U(N)-concentration} write $z_t=\omega_te^{\frac{2\pi i K}{N}}$, with $\omega_t$ in
the arc from $1$ to $e^{\frac{2\pi i}{N}}$ and
$K\in\{0,\ldots,N-1\}$, and similarly $z=\omega e^{\frac{2\pi i K}{N}}$
the second term of \eqref{E:coupled-differences} can be bounded as
\begin{align*}
\E\left[F(z_tV)-F(zV)\right]
	&=\E\left[F(\omega_te^{\frac{2\pi i
  K_t}{N}}V)\right]-\E\left[F(\omega e^{\frac{2\pi i K}{N}}V)\right] \\
	&= \E\left[F(\omega_tV)-F(\omega V)\right] 
	\le \frac{2\pi}{N}\cdot N,
\end{align*}
where the second equality follows from the independence of $V$ with
$(z,z_t)$ and Fubini's theorem, and the inequality uses the fact that,
for $V$ fixed,
$F(\omega V)$ is $N$-Lipschitz as a function of $\omega$, with
$\omega,\omega_t$ lying in an arc of length $\frac{2\pi}{N}$.

Combining this last estimate with (\ref{eB}),
(\ref{E:coupled-differences}), (\ref{eC}), and (\ref{eD}) implies that
\begin{equation*}\begin{split}
W_1(\overline{\mu}^N_t,\nu)&\le\frac{1}{N}\sup_{|F|_L\le1}\big|\E F(U_t)-\E
F(U)\big|\\&\le e^{-\frac{t(1+o(1))}{8\log N}}+\frac{2\pi}{N}.
 \end{split}\end{equation*}

\end{proof}

Finally, we compare the limiting (large $N$) measure $\nu_t$ to the uniform measure $\nu$. We restate and prove here Proposition \ref{T:biane-to-uniform}.

\begin{hack}
For $\nu_t$ and $\nu$ defined as above, there is a constant $C\in(0,\infty)$ so that for all $t\ge1$ 
\[W_1(\nu_t,\nu)\le Ct^{3/2}e^{-t/4}.\]
\end{hack}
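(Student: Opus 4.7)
The approach mirrors the Fourier-analytic strategy used in the proof of Theorem \ref{T:avg-to-limit}. By the Kantorovich--Rubinstein duality, it suffices to bound $\int f\,d\nu_t - \int f\,d\nu$ uniformly over $1$-Lipschitz test functions $f\colon\Circle\to\R$. The key simplification compared to Theorem \ref{T:avg-to-limit} is that $\nu$ is uniform on $\Circle$, so $\int z^k\,d\nu = 0$ for every $k\neq 0$; consequently, writing $m_k(t):=\int z^k\,d\nu_t$, Biane's closed-form expression
\[
m_k(t) \;=\; \frac{e^{-kt/2}}{k^2}\sum_{j=0}^{k-1}\binom{k}{j+1}\frac{(-kt)^j}{j!},\qquad k\ge 1,
\]
together with $m_{-k}(t)=\overline{m_k(t)}$, becomes the central input.

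The plan is to truncate the Fourier expansion of $f$ at level $m$, exactly as in Theorem \ref{T:avg-to-limit}. The Lebesgue--Jackson bound $\|f-S_m f\|_\infty\le C\log(m)/m$ for the partial sum $S_m f(z) := \sum_{|k|<m}\hat f(k)z^k$, combined with $|\hat f(k)|\le C'/|k|$ for 1-Lipschitz $f$, reduces the problem to bounding
\[
W_1(\nu_t,\nu)\;\le\;\frac{2C\log m}{m} \;+\; C'\sum_{1\le k<m}\frac{|m_k(t)|}{k}.
\]
The main technical step is to show that $|m_k(t)|\le P(k)\,e^{-kt/2}$ for some polynomial $P$. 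One route is to reindex the alternating sum in Biane's formula as
\[
\sum_{j=0}^{k-1}\binom{k}{j+1}\frac{(kt)^j}{j!} \;=\; \sum_{l=1}^{k}\binom{k}{l}\frac{(kt)^{l-1}}{(l-1)!}
\]
and, after invoking Stirling's bound, estimate this by a truncated modified-Bessel-type series; alternatively, one recognizes the sum (before taking absolute values) as the Laguerre polynomial $L_{k-1}^{(1)}(kt)$ and applies classical inequalities. This yields $|m_k(t)|/k \lesssim (te^{1-t/2})^k/(tk^{5/2})$ up to constants, so once $t$ is past a fixed threshold the series $\sum_k |m_k(t)|/k$ is geometric in $k$ and dominated by its $k=1$ value $e^{-t/2}$; letting $m\to\infty$ (legitimate once the moment sum converges absolutely) collapses the truncation error $\log(m)/m$. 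The slight relaxation from $e^{-t/2}$ to $Ct^{3/2}e^{-t/4}$ then absorbs the polynomial corrections coming from the tail $k\ge 2$ and the bounded range $1\le t\le t_0$, on which the trivial estimate $W_1(\nu_t,\nu)\le 2\pi$ suffices after enlarging $C$.

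The main obstacle is retaining the cancellation in Biane's alternating sum. Taking absolute values naively inside the sum destroys this cancellation and can produce polynomial-in-$t$ factors that overwhelm the $e^{-kt/2}$ decay for moderate $k$. A careful choice of moment estimate is therefore required: for instance, the Erd\'elyi bound $|L_n^{(1)}(x)|\le (n+1)e^{x/2}$ gives only the uniform estimate $|m_k(t)|\le 1/k$, which must be combined with a sharper Bessel-type or leading-order asymptotic in the large-$t$ regime to obtain the exponent $e^{-t/4}$ in the claim. Once this moment bound is in hand, the remaining Fourier manipulation is a routine optimization of the truncation level $m$, identical in structure to the proof of Theorem \ref{T:avg-to-limit}.
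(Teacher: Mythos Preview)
Your overall framework---Kantorovich--Rubinstein duality, Fourier truncation $S_m$, and the Lebesgue--Jackson bound $\|f-S_mf\|_\infty\lesssim \log m/m$---matches the paper exactly. The divergence, and the gap, is in the moment estimate. You assert that ``the main obstacle is retaining the cancellation in Biane's alternating sum'' and that taking absolute values na\"ively fails. The paper does precisely the opposite: it sets $A_k(t):=Q_k(-t)$ (the triangle-inequality bound on $|Q_k(t)|$) and proves by a one-line induction that $A_k(t)\le t^{k-1}k[(k-1)!]^2$. This bound is nowhere near polynomial in $k$, so one certainly cannot send $m\to\infty$ as you propose. Instead the paper uses $[(k-1)!]^2\le m^{2(k-1)}$ to bound the moment sum by $e^{-t/2}\sum_{k=1}^m(tm^2 e^{-t/2})^{k-1}$, then chooses $m=\lfloor e^{t/4}/\sqrt{2t}\rfloor$ to force the geometric ratio below $1/2$. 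The factor $t^{3/2}e^{-t/4}$ in the statement is exactly the truncation error $\log m/m$ at this specific $m$; it is not, as you write, a slack absorbing ``polynomial corrections from the tail $k\ge2$'' or a cushion for bounded $t$.

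So the proposal has two concrete problems. First, it is incomplete: you never establish a usable moment bound---the Laguerre and Bessel estimates are only gestured at, and the Erd\'elyi inequality you do cite gives no decay in $t$. Second, the narrative is internally inconsistent: you claim to let $m\to\infty$ (which, if legitimate, would yield $Ce^{-t/2}$, strictly sharper than the claim), yet also speak of a ``routine optimization of the truncation level $m$'' and a ``relaxation'' to $t^{3/2}e^{-t/4}$. The paper's argument is simpler than you anticipate: no cancellation is exploited, only the crude factorial bound and the choice of $m$ just large enough that the na\"ive moment sum still converges geometrically.
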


\smallskip

Observe in particular that as $t\to\infty,$ $t^{3/2}e^{-t/4}\le
e^{-\frac{t}{8\log(N)}}$, and so Theorem \ref{T:main-nu-t} follows from Propositions \ref{T:avg-to-uniform} and \ref{T:biane-to-uniform}
together with the triangle inequality.

\begin{proof}[Proof of Proposition \ref{T:biane-to-uniform}]
The measure $\nu_t$ is symmetric, and the moments of $\nu_t$ for $k\ge 1$ are given by 
\[\int_{\Circle}z^kd\nu_t(z)=Q_k(t)e^{-\frac{kt}{2}},\]
where 
\[Q_k(t) :=\sum_{j=0}^{k-1}\frac{(-tk)^j}{(j+1)!}\binom{k-1}{j};\]
	see \cite{Biane1997}.
	As in the proof of Theorem \ref{T:avg-to-limit}, for a fixed 1-Lipschitz test function $f:\Circle\to\R$, let 
\[ S_m(z) := \sum_{|k|<m} \hat{f}(k)z^k \]
and we have that $|\hat{f}(k)|\le\frac{C}{k}$ for all $k\ge1$. Then since both $\nu_t$ and $\nu$ are probability measures on $\Circle$ and $\int_{\Circle}z^jd\nu(z)=0$ if $j\neq 0$,
\begin{align}
\notag
	\left|\int S_m(z)\,d\nu_t(z)-\int S_m(z)\,d\nu(z)\right|
	&=\left|\sum_{1\le|k|\le m}\hat{f}(k)\int z^k\,d\nu_t(z)\right| \\
\label{E:trig-poly-diffs}
	&\le C\sum_{1\le k\le m}\frac{1}{k}\big|Q_k(t)\big|e^{-\frac{kt}{2}}.
\end{align}

Let 
\[A_k(t):=Q_k(-t)=\sum_{j=0}^{k-1}\frac{(tk)^j}{(j+1)!}\binom{k-1}{j},\]
so that $|Q_k(t)|\le A_k(t)$.
Now, 
	\begin{align*}A_{k+1}(t)&=1+\sum_{j=1}^k\frac{[t(k+1)]^{j}}{(j+1)!}\binom{k}{j} \\
	&=1+tk(k+1)\sum_{j=1}^k\left(\frac{\left(1+\frac{1}{k}\right)^{j-1}}{j(j+1)}\right)\left[\frac{(tk)^{j-1}}{k(j-1)!}\binom{k}{j}\right]\end{align*} 
and note that
\[A_k(t)=\sum_{j=0}^{k-1}\frac{(tk)^j}{(j+1)!}\binom{k-1}{j}=\sum_{j=0}^{k-1}\frac{(tk)^j}{kj!}\binom{k}{j+1}=\sum_{\ell=1}^k\frac{(tk)^{\ell-1}}{k(\ell-1)!}\binom{k}{\ell}.\]
Since
$\frac{\left(1+\frac{1}{k}\right)^{\ell-1}}{\ell(\ell+1)}$
is decreasing as a function of $\ell$ on $\{1,\ldots,k\}$, it follows
that 
\[A_{k+1}(t)\le 1+\left(\frac{tk(k+1)}{2}\right)A_k(t)\le
tk(k+1)A_k(t),\]
since $t,k\ge 1$.
By induction and the fact that $A_1(t)=1$, this implies that 
\[|Q_k(t)|\le A_k(t)\le t^{k-1}k[(k-1)!]^2.\]
It now follows from
\eqref{E:trig-poly-diffs} that
\begin{multline*}\left|\int S_m(z)\,d\nu_t(z)-\int S_m(z)\,d\nu(z)\right|
	\le \sum_{k=1}^mt^{k-1}[(k-1)!]^2e^{-\frac{kt}{2}}\\
	\le
	  e^{-t/2}\sum_{k=1}^m\left(t(k-1)^2e^{-t/2}\right)^{k-1}\le
e^{-t/2}\sum_{k=1}^m\left(tm^2e^{-t/2}\right)^{k-1}.\end{multline*}
Choose
$m=\left\lfloor\frac{1}{\sqrt{2t}}e^{t/4}\right\rfloor,$ so
that $tm^2e^{-t/2}\le\frac{1}{2}$. Then 
\[\left|\int S_m(z)\,d\nu_t(z)-\int S_m(z)\,d\nu(z)\right|\le 2e^{-t/2}. \]
As in the proof of Theorem \ref{T:avg-to-limit}, we have that $\|S_m-f\|_\infty\le C'\frac{\log m}{m}$, which for the chosen value of $m$ yields
\[\|S_m-f\|_\infty\le C''t^{3/2}e^{-t/4}.\]
Combining these estimates completes the proof.
\end{proof}

\section{Convergence of paths}
\label{s.paths}
This section is devoted to the proof of Theorem \ref{T:close-paths}.  The idea is to first discretize the interval $[0,T]$ and apply the bound from
Proposition \ref{T:W_1-concentration} at the discretization points, then move from
approximation at this discrete set of points to approximation along an
entire path via a continuity property of the family of measures $\{\nu_t\}_{t>0}$.

The following tail bound is used in both parts of the argument.

\begin{prop}\label{T:BM-tail}
Let $\{U_t\}_{t\ge 0}$ denote Brownian motion in $\Unitary{N}$ with $U_0=I_N$, and let $d_g$ denote the geodesic distance on $\Unitary{N}$ induced by $\inprod{\cdot}{\cdot}_N$.  Then for all $\delta,r,s>0$,
	\[\P\left(\sup_{0<t<\delta}d_g(U_t,I_N)\ge r+2s\right)\le 16\left(1+\frac{r}{s}\right)^{N^2}e^{-\frac{r^2}{2\delta}}.\]
\end{prop}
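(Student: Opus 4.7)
The target bound has the shape (covering number in dimension $N^2$) times (one-dimensional Gaussian tail at time $\delta$), so my plan is to reduce to a sup of a standard Brownian motion on $\mathfrak{u}(N)\simeq\mathbb{R}^{N^2}$ and then chain via an $s$-net of directions.

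Step one is to establish a pathwise domination $d_g(U_t, I_N)\le \|W_t\|_N$ almost surely, where $W_t$ is the driving Brownian motion on $\mathfrak{u}(N)$ appearing in the It\^o SDE for $U_t$. This reflects the nonnegative Ricci curvature of $\Unitary{N}$ under its bi-invariant metric: in positive curvature the group-theoretic distance traversed is dominated by the Euclidean distance traversed by the driving Brownian motion. The argument can be made via It\^o's formula applied to $d_g(U_t,I_N)^2$, with Laplacian comparison handling the nonsmoothness at the cut locus; equivalently, the Cartan rolling/development from $\mathfrak{u}(N)$ to $\Unitary{N}$ is weakly contractive under nonnegative sectional curvature. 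In the abelian case $\Unitary{1}\simeq\Circle$ this is transparent, since $d_g(e^{ib_t/N},1)\le |b_t|/N$.

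Step two estimates $\P[\sup_{0<t<\delta}\|W_t\|_N \ge r+2s]$ for a standard Brownian motion $W_t$ in $\R^{N^2}$. Let $\mathcal{N}$ be a maximal $s$-separated subset of the closed ball $\{v\in\mathfrak{u}(N):\|v\|_N\le r\}$, so that $|\mathcal{N}|\le (1+r/s)^{N^2}$ after absorbing the standard volume constant into the $16$. A direct argument shows that on the event in question there exists $\xi\in\mathcal{N}$ with $\sup_{0<t<\delta}\langle W_t,\xi/\|\xi\|_N\rangle_N\ge r$: one picks $\xi$ close to the rescaled excursion point. For each fixed unit direction $\eta$, the projection $\langle W_t,\eta\rangle_N$ is a standard real-valued Brownian motion, and the reflection principle gives
\[ \P\left(\sup_{0<t<\delta}\langle W_t,\eta\rangle_N\ge r\right)\le 2e^{-r^2/(2\delta)}. \]
A union bound over $\mathcal{N}$ yields the claimed estimate, with the leading constant $16$ collecting the factor $2$ from reflection, a factor $2$ from passing between one- and two-sided suprema, and the constant implicit in the covering estimate.

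The main obstacle is the pathwise inequality in step one: in the noncommutative matrix case the drift $-\tfrac12 U_t\,dt$ in the SDE interacts nontrivially with the curvature and the cut locus of $d_g(\cdot,I_N)$ must be handled carefully. If a clean pathwise bound proves fragile, a backup is to chain directly on the group: time-discretize $(0,\delta)$ and apply the concentration inequality of Proposition \ref{T:BM-U(N)-concentration} to the $1$-Lipschitz function $d_g(\cdot,I_N)$ at each grid point, with a modulus-of-continuity estimate on the increments $d_g(U_t,U_{t_k})$ controlled by rerunning concentration on strong Markov increments. This yields an analogous bound, though possibly with a larger absolute constant in place of $16$, which suffices for the subsequent application to Theorem \ref{T:close-paths}.
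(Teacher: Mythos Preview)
Your Step~1 is a genuine gap: the pathwise inequality $d_g(U_t,I_N)\le\|W_t\|_N$ with the \emph{specific} driving Brownian motion $W_t$ is false in the nonabelian case. The stochastic development $w\mapsto u$ (solving $\dot u=u\dot w$, $u_0=I$) does not send closed loops in $\mathfrak{u}(N)$ to closed loops in $\Unitary{N}$: holonomy obstructs this. Concretely, in $\SUnitary{2}$ take the piecewise-linear path $0\to\pi X\to\pi X+\pi Y\to\pi Y\to 0$ in $\mathfrak{su}(2)$; its development ends at $e^{\pi X}e^{\pi Y}e^{-\pi X}e^{-\pi Y}=-I$, which is antipodal to $I$, while $\|w\|$ returns to $0$. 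By the support theorem this behavior is seen by Brownian paths with positive probability, so the almost-sure pathwise bound cannot hold. What \emph{is} available under $\mathrm{Ric}\ge 0$ is a \emph{distributional} comparison: by Laplacian comparison $\Delta d_g(\cdot,I_N)\le (N^2-1)/d_g$, so the radial process is stochastically dominated by a Bessel$(N^2)$ process, i.e.\ by $\|B_t\|$ for an \emph{independent} Brownian motion $B$ in $\R^{N^2}$. That would feed into your Step~2, but it is not the statement you wrote, and the cut-locus issue you flag still has to be handled (e.g.\ via Calabi's trick or Kendall's local-time argument). Your backup plan also does not yield the stated bound as written: Proposition~\ref{T:BM-U(N)-concentration} concentrates $d_g(U_t,I_N)$ about its mean, which is of order $N\sqrt{t}$, not about $0$.

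The paper takes a completely different and much shorter route. First, by left-invariance and the triangle inequality, for any $U$ with $d_g(U,I_N)\le s$ one has $\{\sup_{t<\delta}d_g(U_t,I_N)\ge r+2s\}\subseteq\{\sup_{t<\delta}d_g(UU_t,I_N)\ge r+s\}$, so the probability is bounded by $\inf_{U\in\overline{B(I_N,s)}}\P(\sup_{t<\delta}d_g(UU_t,I_N)\ge r+s)$. This is exactly the quantity controlled by Grigor'yan's exit-time estimate for Brownian motion on a Riemannian manifold (Equation~(9.20) of \cite{Grigoryan1999}), which gives the factor $16\,e^{-r^2/(2\delta)}$ times the volume ratio $\vol(B(I_N,s+r))/\vol(B(I_N,s))$. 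Finally, $\mathrm{Ric}\ge 0$ and Bishop--Gromov bound that ratio by $(1+r/s)^{N^2}$. So both ingredients are off-the-shelf, the constant $16$ is Grigor'yan's, and no pathwise comparison or chaining on $\mathfrak{u}(N)$ is needed.
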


\begin{proof}
If $d_g(U,I_n)<s,$ then by left invariance of the metric and the triangle inequality
\[d_g(U_t,I_N)=d_g(UU_t,U)\le d_g(UU_t,I_n)+s.\]
Thus, 
\begin{align*}
\P\left(\sup_{0<t<\delta}d_g(U_t,I_N)\ge
	2s+r\right) \le
	\inf_{d_g(U,I)\le s}\P\left(\sup_{0<t<\delta}d_g(UU_t,I_N)\ge
	s+r\right).\end{align*}
Applying the bound in
Equation (9.20) of \cite{Grigoryan1999} with $M=\Unitary{N}$ and $K=\overline{B\left(I_N,s\right)}$ (the closed
geodesic ball of radius $s$ about $I_N$) gives that 
	\[\inf_{d_g(U,I)\le s}\P\left(\sup_{0<t<\delta}d_g(UU_t,I_N)\ge s+r\right)\le16\frac{\vol(B\left(I_N,s+r\right))}{\vol(B(I_N,s))}e^{-\frac{r^2}{2\delta}}.\]
	Then, recalling again that $\mathrm{Ric}\ge0$ on $\Unitary{N}$, the Bishop--Gromov comparison theorem allows us to control the volume of balls in $\Unitary{N}$ by the volume of balls in $\mathbb{R}^{N^2}$ (see for example Theorem 3.16 of \cite{Grove1987}); in particular, 
\[\frac{\vol\left(B\left(I_N,s+r\right)\right)}{\vol\left(B\left(I_N,s\right)\right)}\le
\left(1+\frac{r}{s}\right)^{N^2},\]
which completes the proof.
\end{proof}

The following lemma gives the required continuity for the family of measures $\{\nu_t\}$.

\begin{lemma}  \label{T:nu-continuity}There is a constant $c$ such that for all $0<s<t$
\[W_1(\nu_t,\nu_s)\le c\sqrt{t-s}.\]

\end{lemma}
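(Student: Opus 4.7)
The plan is to establish a finite-$N$ analogue first, bounding $W_1(\overline{\mu}^N_t,\overline{\mu}^N_s)$ uniformly in $N$ by $C\sqrt{t-s}$ via a natural coupling of the Brownian motion at times $s$ and $t$, and then to pass to the limit $N\to\infty$ using Theorem \ref{T:avg-to-limit}.

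First, using the left-invariance of the SDE defining $U^N_t$ together with the Markov property, $(U^N_s)^{-1}U^N_t$ has the same distribution as $V_{t-s}$, where $V$ is a Brownian motion on $\Unitary{N}$ issued from the identity. Since the geodesic distance $d_g$ on $\Unitary{N}$ induced by $\langle\cdot,\cdot\rangle_N$ is itself left-invariant, this gives $d_g(U^N_t,U^N_s)\stackrel{d}{=}d_g(V_{t-s},I_N)$. The $\frac{|f|_L}{N}$-Lipschitz property of $U\mapsto\int f\,d\mu_U$ recalled in Section \ref{s.concentration}, combined with Kantorovich--Rubinstein duality and the convexity inequality $W_1(\mathbb{E}\mu_X,\mathbb{E}\mu_Y)\le\mathbb{E}\,W_1(\mu_X,\mu_Y)$, yields
\[W_1(\overline{\mu}^N_t,\overline{\mu}^N_s)\le\mathbb{E}\,W_1(\mu_{U^N_t},\mu_{U^N_s})\le\frac{1}{N}\mathbb{E}[d_g(V_{t-s},I_N)].\]

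Next I would bound $\mathbb{E}[d_g(V_\tau,I_N)]\le CN\sqrt{\tau}$ with $C$ a universal constant. Applying Proposition \ref{T:BM-tail} with the choice $r=s$ produces
\[\P(d_g(V_\tau,I_N)\ge 3r)\le 16\cdot 2^{N^2}e^{-r^2/(2\tau)},\]
and splitting $\mathbb{E}[d_g(V_\tau,I_N)]=\int_0^\infty\P(d_g(V_\tau,I_N)>u)\,du$ at $u=3CN\sqrt{\tau}$ for a sufficiently large universal $C$ makes the Gaussian decay dominate the combinatorial factor $2^{N^2}$, from which the claimed bound follows. Combined with the previous display, this gives $W_1(\overline{\mu}^N_t,\overline{\mu}^N_s)\le C'\sqrt{t-s}$ with $C'$ independent of $N$.

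Finally, Theorem \ref{T:avg-to-limit} gives $W_1(\overline{\mu}^N_r,\nu_r)\to 0$ as $N\to\infty$ for each fixed $r>0$, so sending $N\to\infty$ in the triangle inequality
\[W_1(\nu_t,\nu_s)\le W_1(\nu_t,\overline{\mu}^N_t)+W_1(\overline{\mu}^N_t,\overline{\mu}^N_s)+W_1(\overline{\mu}^N_s,\nu_s)\]
produces the stated bound. The main technical obstacle is the uniform-in-$N$ control of $\mathbb{E}[d_g(V_\tau,I_N)]$: the tail bound in Proposition \ref{T:BM-tail} degrades in $N$, and one must balance the auxiliary parameters so that the combinatorial factor $(1+r/s)^{N^2}$ does not overwhelm the Gaussian decay; the choice $r=s$ keeps this factor at $2^{N^2}$, which is precisely small enough for the argument to go through.
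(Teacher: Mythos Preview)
Your proposal is correct and follows essentially the same route as the paper: bound $W_1(\overline{\mu}^N_t,\overline{\mu}^N_s)$ by $\frac{1}{N}\E d_g(I_N,U_{t-s})$ via stationarity of increments and the Lipschitz property of $U\mapsto\int f\,d\mu_U$, control this expectation using Proposition~\ref{T:BM-tail}, and then let $N\to\infty$ through the triangle inequality with Theorem~\ref{T:avg-to-limit}. The only cosmetic differences are that the paper takes $r=2s$ (yielding a $3^{N^2}$ factor) rather than your $r=s$ (yielding $2^{N^2}$), and bounds the expectation via $\E d_g\le a+(\text{diameter})\cdot\P(d_g>a)$ instead of integrating the tail; both work for the same reason.
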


\begin{proof}
The triangle inequality for $W_1$ and Theorem \ref{T:avg-to-limit} imply that for any $N$
\begin{align*}W_1(\nu_t,\nu_s)
	&\le W_1(\nu_t,\overline{\mu}^N_t)+
W_1(\nu_s,\overline{\mu}^N_s)+
W_1(\overline{\mu}^N_t,\overline{\mu}^N_s)\\
	&\le C\frac{(t^{2/5}+s^{2/5})\log N}{N^{2/5}}+W_1(\overline{\mu}^N_t,\overline{\mu}^N_s).\end{align*}
Moreover, recall that
\[W_1(\overline{\mu}^N_t,\overline{\mu}^N_s)=\sup_{|f|_L\le
  1}\E\left[\int fd\mu_t^N-\int
  fd\mu^N_s\right]\le\frac{\E\|U_t-U_s\|_N}{N},\]
since $U\mapsto\int fd\mu_U$ is $\frac{|f|_L}{N}$-Lipschitz.
Trivially, for any $U,V\in\Unitary{N}$, $\|U-V\|_N\le
d_g(U,V)$.  So, using the stationarity of increments together with Proposition \ref{T:BM-tail} with $r=2s=\frac{c}{2}N\sqrt{t-s}$,
	\begin{align*}
		\E\|U_t-U_s\|_N=\E\|I_N-U_{t-s}\|_N
			&\le \E d_g(I_N,U_{t-s})\\
			&\le cN\sqrt{t-s}+N\P\left(d_g(I,U_{t-s})>cN\sqrt{t-s}\right)\\
	&\le cN\sqrt{t-s}+3^{N^2}Ne^{-c^2N^2/8}.\end{align*}
Choosing $c$ large enough
that $\log3+\frac{\log N}{N^2}-\frac{c^2}{8}<0$ for all $N$,
this gives that 
\[\E\|U_t-U_s\|_N\le cN\sqrt{t-s}+1\] and thus 
\[W_1(\nu_t,\nu_s)\le C\frac{(t^{2/5}+s^{2/5})\log N}{N^{2/5}} + c\sqrt{t-s}+\frac{1}{N}.\]
Since this holds for any $N$, the result follows.
\end{proof}

\begin{proof}[Proof of Theorem \ref{T:close-paths}]
Let $m\in\N$ such that $\frac{T}{m}\le 1$, and for $j=1,\ldots,m$, let $t_j:=\frac{jT}{m}$.
%%
\iffalse
\begin{align*}
\sup_{0\le t\le T}W_1(\mu^N_t,\nu_t)
	&= \max_{1\le j\le m} \sup_{|t-t_j|<\frac{T}{m}} W_1(\mu^N_t,\nu_t) \\
	&\le \max_{1\le j\le m} \sup_{|t-t_j|<\frac{T}{m}} 
		\left(W_1(\mu^N_t,\mu^N_{t_j}) + W_1(\mu^N_{t_j},\nu_{t_j}) + W_1(\nu_{t_j},\nu_t)\right)
\end{align*}
\fi
%%
By Lemma \ref{T:nu-continuity}, 
\[\sup_{\substack{0\le s,t\le T\\
  |s-t|<\frac{T}{m}}}W_1(\nu_t,\nu_s)\le c\sqrt{\frac{T}{m}},\]  so
that if $x>3c\sqrt{\frac{T}{m}}$, then
	\begin{multline}\label{E:reduction-to-tjs}
		\P\left(\sup_{0\le t\le
		T}W_1(\mu^N_t,\nu_t)>x\right)\\
		\le  \P\left(\max_{1\le j\le
	m}\sup_{|t-t_j|<\frac{T}{m}}W_1(\mu^N_t,\mu^N_{t_j})>\frac{x}{3}\right)+\P\left(\max_{1\le         j\le  m}W_1(\mu^N_{t_j},\nu_{t_j})>\frac{x}{3}\right).\end{multline}		
	Using again that
$W_1(\mu^N_t,\mu^N_s)\le\frac{\|U_t-U_s\|_N}{N},$ we have that for any $A\subseteq[0,T]^2$
\begin{align*}
\P\left(\sup_{(s,t)\in A}W_1(\mu^N_t,\mu^N_s)>\frac{x}{3}\right)
	&\le\P\left(\sup_{(s,t)\in A}\|U_t-U_s\|>\frac{Nx}{3}\right)\\
	&=\P\left(\sup_{(s,t)\in A}\|I_N-U_t^{-1}U_s\|>\frac{Nx}{3}\right) \\
	&=\P\left(\sup_{(s,t)\in A}\|I_N-U_{t-s}\|>\frac{Nx}{3}\right),
\end{align*}
where the first equality is because $U_t\in\Unitary{N}$ and the second
is by the stationarity of the increments of Brownian motion.
It follows from this and \eqref{E:reduction-to-tjs} that
	\begin{multline*}\P\left(\sup_{0\le t\le
		T}W_1(\mu^N_t,\nu_t)>x\right) \\
	\le  m\P\left(\sup_{|t|<\frac{T}{m}}\|I_N-U_t\|>\frac{Nx}{3}\right)+m\max_{1\le         j\le  m}\P\left(W_1(\mu^N_{t_j},\nu_{t_j})>\frac{x}{3}\right).\end{multline*}
Applying Proposition \ref{T:BM-tail} to the first term with $2s=r=\frac{Nx}{6}$ gives that
\begin{align*}
\P\left(\sup_{|t|<\frac{T}{m}}\|I_N-U_t\|_N>\frac{Nx}{3}\right)
		&\le \P\left(\sup_{|t|<\frac{T}{m}}d_g(U_t,I_N)>\frac{Nx}{3}\right) \\
		&\le 3^{N^2}e^{-\frac{N^2x^2m}{72T}}. 
\end{align*}
For the second term, applying the
estimate following Proposition \ref{T:W_1-concentration} together with
Theorem \ref{T:avg-to-limit}, if $x\ge 3C\frac{T^{2/5}\log(N)}{N^{2/5}}>
6c\left(\frac{T}{N^2}\right)^{1/3}$, then 
\[\max_{1\le         j\le
	m}\P\left(W_1(\mu^N_{t_j},\nu_{t_j})>\frac{x}{3}\right) \le 2e^{-\frac{N^2x^2}{T}}.\]

	We thus have that, for any $m\in\mathbb{N}$ such that $\frac{T}{m}\le1$ and $x\ge 3C\frac{T^{2/5}\log(N)}{N^{2/5}}$,
	\begin{align*}\P\left(\sup_{0\le t\le
		T}W_1(\mu^N_t,\nu_t)>x\right)&
\le  m3^{N^2}e^{-\frac{N^2x^2m}{72T}}+2me^{-\frac{N^2x^2}{T}}.\end{align*}
Choosing $m=\left\lceil72\left(\frac{T\log 3}{x^2}+1\right)\right\rceil$ completes
the proof of the first claim; the second follows by choosing
	$x=3C\frac{T^{2/5}\log(N)}{N^{2/5}}$ and applying the Borel--Cantelli lemma.

\end{proof}

\bibliographystyle{abbrv}
\bibliography{bm}

\end{document}